\theoremstyle{plain}
\newtheorem{thm}{Theorem}
\newtheorem{theorem}[thm]{Theorem}
\newtheorem{corollary}[thm]{Corollary}
\newtheorem{lemma}[thm]{Lemma}
\newtheorem*{mr}{Main Result}
\newtheoremstyle{exm}
{9pt}{9pt}{}{}{\bfseries}{}{.5em}{}
\theoremstyle{exm}
\newtheorem{exm}[thm]{Example}
\newtheoremstyle{rmk}
{9pt}{9pt}{}{}{\bfseries}{}{.5em}{}
\theoremstyle{rmk}
\newtheorem{rmk}[thm]{Remark}
\theoremstyle{alg}
\newtheoremstyle{question}
{9pt}{9pt}{}{}{\bfseries}{}{.5em}{}
\theoremstyle{question}
\numberwithin{equation}{section}
\numberwithin{thm}{section}
\numberwithin{figure}{section}
\theoremstyle{definition}
\newtheorem{definition}[thm]{Definition}
\newcommand{\R}{\mathbb{R}}
\newcommand{\C}{\mathbb{C}}
\newcommand{\Z}{\mathbb{Z}}
\newcommand{\sm}{(M,\omega)} 
\newcommand{\comp}{\left(M,\omega,T,\Phi\right)} 
\title[Topological properties of positive complexity one spaces]{On topological properties of positive complexity one spaces}
\author[S. Sabatini]{Silvia Sabatini}
\address{Department Mathematik/Informatik, Universit\"at zu K\"oln,
  Weyertal 86-90, 50931 K\"oln, Germany.}
\email{sabatini@math.uni-koeln.de}
\author[D. Sepe]{Daniele Sepe}
\address{Instituto de Matem\'atica e Estat\'istica, Departamento de
  Matem\'atica Aplicada (GMA), Universidade Federal Fluminense, Campus
  Gragoat\'a, Rua Prof. Marcos Waldemar de Freitas Reis, s/n, S\~ao
  Domingos, Niter\'oi, RJ, 24210--201, Brazil.}
\email{danielesepe@id.uff.br}
\date{\today}
\begin{document}

\keywords{Hamiltonian torus actions, positive monotone symplectic manifolds,
  Fano varieties.}
\subjclass{53D20, 53D35, 57S25.}
\begin{abstract}
Motivated by work of Fine and Panov, and of Lindsay and Panov, we
prove that every closed symplectic complexity one space that is positive (e.g. positive
monotone) enjoys topological properties that
Fano varieties with a complexity one holomorphic torus action possess. In particular, such spaces are simply connected, have
Todd genus equal to one and vanishing odd Betti numbers.
\end{abstract}

\maketitle
\section{Introduction}\label{sec:introduction}
A driving (meta-)question in symplectic topology is to understand how closed symplectic manifolds differ from 
smooth complex projective varieties. While
there are examples of closed symplectic manifolds that cannot
be K\"ahler (see, for instance,
\cite{thurston,rez}), it makes
sense to consider refinements of the above problem. Largely inspired
by work of Fine and Panov \cite{fp_hyp,
  fp} and of Lindsay and Panov \cite{lp}, in this paper we prove that a class of
symplectic manifolds with `sufficiently large' torus symmetries share
topological properties with their complex projective counterparts 
(see the Main Result below).

First, we introduce the class of symplectic manifolds that we
consider. To this end, given
a symplectic manifold $\sm$, we denote its first
Chern class by $c_1$.

\begin{definition}\label{defn:conditions}
  A closed symplectic manifold $\sm$ is 
  \begin{itemize}[leftmargin=*]
  \item {\em positive monotone} if there exists $\lambda > 0$ such
    that $c_1 = \lambda[\omega]$, and
  \item {\em symplectic Fano} if there exists a compatible
    almost-complex structure $J$ such that $c_1[A] > 0$ for all non-zero $A \in
    H_2(M)$ that can be represented by a $J$-holomorphic curve.
  \end{itemize}
\end{definition}

\begin{rmk}\label{rmk:convention}
  In some works in the literature, what we call `positive monotone'
  is referred to as `monotone' (sometimes imposing $\lambda =1$ in Definition
  \ref{rmk:convention}), or `symplectic Fano' (see, for instance,
  \cite{lp}). The above definition of symplectic Fano is taken from
  \cite[Remark 11.1.1]{mcduff_sal}. 
\end{rmk}

\begin{rmk}\label{rmk:pos_mon}
  Observe that,
  given an almost complex structure $J$ compatible with $\omega$,
  $J$-holomorphic curves in $(M,\omega)$ are necessarily symplectic.
Hence positive monotone implies symplectic Fano in Definition
  \ref{defn:conditions}.
\end{rmk}

The class of manifolds introduced in Definition
\ref{defn:conditions} can be thought of as the symplectic analog of
smooth {\em Fano} complex varieties, namely those having an ample
anticanonical bundle. In fact, a Fano variety $Y$, together
with the symplectic form induced by pulling back the Fubini-Study form
on projective space along the embedding given by ampleness of the
anticanonical bundle, is necessarily positive monotone. Fano varieties have been extensively studied in
differential, symplectic and algebraic geometry. For the purposes of
this paper, we remark that they are 
are simply
connected (see \cite[Corollary 6.2.18]{isko_prok} and \cite[Remark
3.12]{lindsay}), and have Todd genus equal to one (see \cite[Section
1.8]{hirze} for a definition).

Next we introduce the symmetries that we allow. Throughout this paper, we denote a compact torus by $T$. Moreover, all actions are assumed to be
effective, unless otherwise stated. On a positive monotone/symplectic Fano closed
symplectic manifold $\sm$ we consider {\em Hamiltonian} $T$-actions,
i.e., those for which there exists a $T$-invariant smooth map $\Phi : M \to
\left(\mathrm{Lie}(T)\right)^*$, called {\em moment map}, such that, for all $\xi \in
\mathrm{Lie}(T)$, $ \iota_{X_{\xi}}\omega = d \langle \Phi, \xi \rangle$,
where $\mathrm{Lie}(T)$ denotes the Lie algebra of $T$, $X_{\xi} \in \mathfrak{X}(M)$ is the vector field induced by
$\xi$, and $\langle \cdot , \cdot \rangle$ is the natural pairing between
$\left(\mathrm{Lie}(T)\right)^*$ and $\mathrm{Lie}(T)$. A {\em Hamiltonian $T$-space} is a symplectic manifold $\sm$ endowed
with an effective Hamiltonian $T$-action. Such a space, together with a
choice of moment map $\Phi$, is denoted by
$\comp$. To make sense of when torus symmetries are `sufficiently
large', we introduce the following notion. 

\begin{definition}\label{defn:complexity}
  The {\em complexity} of a Hamiltonian $T$-space $\comp$
  is $\frac{1}{2}\dim M - \dim T$.
\end{definition}

Intuitively, the smaller the complexity, the larger the
symmetry. Moreover, a simple symplectic argument shows that
the complexity of a Hamiltonian $T$-space is always
non-negative. Henceforth, a Hamiltonian $T$-space of complexity $k$ is
simply referred to as a {\em complexity $k$ space}. Complexity zero spaces are known as symplectic
toric manifolds and it is known that positive monotone complexity zero
spaces are $T$-equivariantly symplectomorphic to toric Fano varieties,
i.e., Fano varieties $Y$ endowed with a holomorphic
$T_{\C}$-action, where $T_{\C} = T \otimes_{\R} \C$ and $\dim_{\C} Y =
\dim_{\C} T_{\C}$. 

This paper begins the study of the relation between positive
monotone (respectively symplectic Fano) complexity one spaces and Fano
varieties $Y$ equipped with an effective holomorphic 
$T_{\C}$-action satisfying $\dim_{\C} Y =
\dim_{\C} T_{\C} + 1$. To the best
of our knowledge, this is an unexplored problem except for low real
dimensions, namely 2 and 4. While the two-dimensional case is not
particularly interesting, a positive monotone 4-dimensional
complexity one space is $S^1$-equivariant
symplectomorphic to a del Pezzo surface (i.e., a Fano surface)
endowed with a holomorphic $\C^*$-action. This can be proved using
techniques that underpin
the classification of closed Hamiltonian $S^1$-spaces in dimension 4 (see
\cite{karshon}). 

The main result of this paper is the following:

\begin{mr}
  If $\comp$ is a closed complexity one space that is either positive monotone, or
  symplectic Fano with respect to a compatible $T$-invariant
  almost-complex structure, then $M$ is simply connected, its odd
  Betti numbers vanish, and $\sm$ has Todd genus
  equal to 1.
\end{mr}

\begin{rmk}\label{rmk:on_main_result}
  As mentioned above, Fano varieties are necessarily simply connected
  and have Todd genus equal to 1. It can be checked that a Fano variety $Y$ that is
  endowed with an effective holomorphic 
  $T_{\C}$-action satisfying $\dim_{\C} Y =
  \dim_{\C} T_{\C} + 1$ has vanishing odd Betti numbers.
\end{rmk}

The above result is very much inspired by work of Fine and Panov \cite{fp_hyp,
  fp} and of Lindsay and Panov \cite{lp}, and should be placed in the
context of the broader question of studying the relation between
closed positive monotone symplectic manifolds and Fano varieties in
the presence of a torus action.
Without assuming the existence of a Hamiltonian torus action, in real dimension $4$ every closed positive monotone symplectic manifold is diffeomorphic to
a del Pezzo surface, i.e., a Fano two-fold (see \cite{mcduff_structure,gromov,taubes}). However, this need not hold
in higher dimensions (see \cite{fp_hyp,rez} for a
counterexample). However, in \cite[Section 1.2 and 7]{fp}
it is conjectured that a closed positive monotone manifold of real dimension 6 with a non-trivial
Hamiltonian $S^1$-action must be diffeomorphic to a Fano
threefold.
In \cite{lp} Lindsay and Panov make some important steps towards
proving the above conjecture as they show that,
like Fano varieties, such a symplectic manifold is simply connected and its Todd genus is
one. Moreover, under various additional hypotheses either on the topology of the manifold or on the type of the action, there is evidence
that real six-dimensional positive monotone symplectic manifolds with a Hamiltonian
$S^1$-action are either diffeomorphic or $S^1$-equivariantly
symplectomorphic to Fano three-folds endowed with a holomorphic $\C^*$-action (see
\cite{cho,cho2,cho3,fp,gvhs,lp,mcduff_six,tolman}). \\

Our Main Result should be compared with (some of) the main results in
\cite{lp,lindsay}. If, on the one hand, the hypotheses in \cite{lp} are
weaker than those of our Main Result, in that they deal with
complexity two spaces, the results therein are specific
to the real 6-dimensional
case, whereas our result applies to all dimensions. Moreover, while we are
able to conclude that in our case the odd Betti numbers vanish, the
corresponding statement in the real 6-dimensional case with a
Hamiltonian $S^1$-action only
holds by imposing further mild conditions (see \cite[Theorem
14.4]{lindsay}). (In fact, there exist complexity one Fano 3-folds with $b_3 > 0$, see \cite[Example
14.8]{lindsay}.) Finally, it is important to remark that the
techniques in \cite{lp} are significantly more sophisticated than
those used in this paper, seeing as, for instance, \cite{lp} uses
Seiberg-Witten theory (cf. Section \ref{sec:proof-main-result}
below). \\

The proof of the above result comes from combining several well-known properties
of closed complexity one spaces under the assumption that the space be
`positive' (see Definition \ref{defn:positive} for details). Closed
complexity one spaces as in the hypothesis of our Main Result satisfy
this positivity condition (see Lemma \ref{thm:main}). Our strategy is
simple: we prove that the assumption of positivity on a closed
complexity one space limits the topology of the connected components
of the fixed point set of the action (see Theorem \ref{thm:main}) and
this is the key ingredient in the proof of our Main Result. To prove
Theorem \ref{thm:main} we use the Duistermaat-Heckman function, the
fact that its minimum need be attained at a vertex for closed
complexity one spaces (see Theorem \ref{thm:log-concavity} and
Corollary \ref{cor:min_DH}), and a topological restriction in the case in
which the vertex that attains the minimum of this function is the
image of a 2-dimensional component of the fixed point set (see Lemma
\ref{lemma:min_fixed_surface_negative}). \\

The paper is structured as follows. In Section
\ref{sec:basic-prop-hamilt} we recall the basics of (closed)
Hamiltonian $T$-spaces. Section
\ref{sec:some-prop-clos} deals with (closed) complexity one spaces and
its aim is to prove Lemma
\ref{lemma:min_fixed_surface_negative}. While most results contained
therein are standard, there are a few
observations that we could not find elsewhere in the literature,
including Lemma \ref{lemma:min_fixed_surface_negative}. Most (if not
all) of the material in Sections \ref{sec:basic-prop-hamilt} and
\ref{sec:some-prop-clos} are probably well-known to experts, and it is
included for completeness. The notion of positivity as well as the
proof of our Main Result can be found in Section \ref{sec:proof-main-result}. 

\subsection*{Acknowledgments} The authors were partially supported by
SFB-TRR 191 grant {\it Symplectic Structures in Geometry, Algebra and
  Dynamics} funded by the Deutsche Forschungsgemeinschaft. D.S. was
partially supported by CNPq grant {\it Bolsa de Produtividade em
  Pesquisa} 3058/2015-0. This study was financed in part by the
Coordena\c{c}\~ao de Aperfei\c{c}oamento de Pessoal de N\'ivel
Superior -- Brasil (CAPES) -- Finance code 001. D.S. would like to
thank Universit\"at zu K\"oln for the kind hospitality during the
period in which this project came to being.

\section{Basic properties of (closed) Hamiltonian $T$-spaces}\label{sec:basic-prop-hamilt}
Throughout this section, given a compact torus $T$, its Lie algebra and the
lattice therein are denoted by
$\mathfrak{t}$ and $\ell =
\ker\left( \exp: \mathfrak{t} \to T\right)$ respectively. The aim of
this section is to recall a few fundamental facts about (closed)
Hamiltonian $T$-spaces, i.e., symplectic manifolds endowed with an effective
Hamiltonian $T$-action. \\

First, we recall the local normal form near fixed points of the
$T$-action, which is a special case of a more general result due to
Marle, and Guillemin and Sternberg (see \cite{marle,gs-local}). Given
a Hamiltonian $T$-space $\comp$, the set of fixed points of the
action is denoted by $M^T$ and endowed with the subspace topology. Fix
a Hamiltonian $T$-space $\comp$, set $\dim M = 2n$, fix $p \in M^T$, and a $T$-invariant compatible almost
complex structure $J$. Since $p$ is a fixed point, there is a
$\C$-linear $T$-action on $T_pM$ that is isomorphic to a $T$-action on $\C^n$ determined by an injective
homomorphism $\rho : T \to (S^1)^n$, where
$(S^1)^n \subset \mathrm{GL}(n;\C)$ is the subgroup of diagonal
matrices whose entries have norm one. The homomorphism $\rho$ is known
as the {\em (symplectic) slice representation}. 

\begin{definition}\label{defn:iso_weights}
  Let $\comp$ be a Hamiltonian $T$-space and let $p$ be a fixed
  point. The differential at the identity
  of the components of the slice representation determines elements $\alpha_1,\ldots,\alpha_n
  \in \mathfrak{l}^*$, called the {\em isotropy weights} of the
  $T$-action at $p$.
\end{definition}

\begin{rmk}\label{rmk:iso_weights}
  \mbox{}
  \begin{itemize}[leftmargin=*]
  \item Since the slice
    representation is injective, it follows that the $\Z$-span of the isotropy weights
    at a fixed point is $\ell^*$.
  \item Since both $T$ and $(S^1)^n$ are
    connected, the isotropy weights $\alpha_1,\ldots,\alpha_n$ (up to
    permutation) determine the slice representation $\rho$.
  \item Let $F \subset M^T$ be a connected component. Then for any $p, p' \in F$
    the isotropy weights of the $T$-action at $p$ and $p'$ are
    equal. Thus it makes sense to talk about the isotropy weights of the
    $T$-action at $F$.  
  \end{itemize}
\end{rmk}

Suppose that $\alpha_1,\ldots,\alpha_n
  \in \mathfrak{l}^*$ are the isotropy weights at $p$. Endowing $\C^n$ with the standard symplectic structure
$\omega_{\mathrm{st}}$, the linear
$T$-action on $\C^n$, determined by $\alpha_1,\ldots,\alpha_n$ as above,
is Hamiltonian and one of the moment maps is given by 
\begin{equation}
  \label{eq:4}
  \Phi_{\mathrm{lin}}\left(z_1,\ldots,z_n\right) = \frac{1}{2} \sum\limits_{j=1}^n
  \alpha_j |z_j|^2 + \Phi(p).
\end{equation}
\noindent
We henceforth refer to the above Hamiltonian $T$-space as the {\em
  linear model} at the fixed point $p$. The following result is a (very!) particular case of the local normal form for
Hamiltonian group actions by compact Lie groups due to Marle,
Guillemin and Sternberg (see \cite{gs-local,marle}).

\begin{thm}[Local normal form at fixed points]\label{thm:lnf}
  Let $\comp$ be a Hamiltonian $T$-space and let $p \in M^T$ be a
  fixed point. Then
  there exist $T$-invariant open neighborhoods $U \subset M$ and $V
  \subset \C^n$ of $p$
  and $0$ respectively, and a $T$-equivariant symplectomorphism 
  $\Psi: (U,\omega)\to (V,\omega_{\mathrm{st}})$ such that $\Phi =
  \Phi_{\mathrm{lin}} \circ \Psi$, where $\Phi_{\mathrm{lin}}$ is as
  in \eqref{eq:4}.
\end{thm}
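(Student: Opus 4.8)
The plan is to deduce the statement from the classical equivariant Darboux theorem, specialized to a fixed point, in three moves: first linearize the action near $p$, then straighten the symplectic form by an equivariant Moser argument, and finally observe that the moment map condition is automatic. The only role played by the general Marle--Guillemin--Sternberg machinery in this fixed-point case is to guarantee that each of these moves can be made $T$-equivariant, which here rests solely on the compactness of $T$. To begin, I would identify the model at the level of tangent spaces: using the $T$-invariant compatible almost complex structure $J$, the triple $(T_pM,\omega_p,J_p)$ is a Hermitian vector space of complex dimension $n$ on which $T$ acts unitarily via the isotropy representation. By the very definition of the isotropy weights, this representation is the one determined by $\alpha_1,\ldots,\alpha_n$, so there is a $\C$-linear symplectic isomorphism $L : (T_pM,\omega_p) \to (\C^n,\omega_{\mathrm{st}})$ intertwining the two $T$-actions.

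Next I would linearize the action itself. Equip $M$ with the $T$-invariant Riemannian metric $g = \omega(\cdot,J\cdot)$; since $T$ acts by isometries fixing $p$, the geodesic exponential map $\exp_p$ is $T$-equivariant and restricts to a $T$-equivariant diffeomorphism from a $T$-invariant neighborhood of $0$ in $T_pM$ onto a $T$-invariant neighborhood of $p$, with differential the identity at $0$. Composing its inverse with $L$ yields a $T$-equivariant diffeomorphism $\psi_0$ from a neighborhood of $p$ onto a neighborhood $V_0$ of $0$ in $\C^n$, with $\psi_0(p)=0$ and $(\diff\psi_0)_p = L$; in particular $(\diff\psi_0)_p$ is a symplectic isomorphism.

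The main step is then the equivariant Moser argument, and this is where the only real work lies. On $V_0$ we have two $T$-invariant symplectic forms $\omega_0 := \omega_{\mathrm{st}}$ and $\omega_1 := (\psi_0^{-1})^*\omega$, which by the previous paragraph agree at the origin. I would interpolate $\omega_t = (1-t)\omega_1 + t\omega_0$, check it is symplectic on a sufficiently small neighborhood of $0$, and solve Moser's equation for the time-dependent vector field whose flow $\chi_t$ satisfies $\chi_t^*\omega_t = \omega_1$, using a primitive of $\omega_0 - \omega_1$ that vanishes to second order at the origin. The hard part is to do this $T$-equivariantly: averaging such a primitive over $T$ makes it $T$-invariant, so the generating vector field is $T$-invariant and its flow is $T$-equivariant and fixes $0$. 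Its time-one map $\chi := \chi_1$ then satisfies $\chi^*\omega_{\mathrm{st}} = \omega_1$. After shrinking to $T$-invariant (connected) neighborhoods $U \ni p$ and $V \ni 0$, the composition $\Psi := \chi\circ\psi_0$ is a $T$-equivariant symplectomorphism $(U,\omega)\to(V,\omega_{\mathrm{st}})$ with $\Psi(p)=0$, since $\Psi^*\omega_{\mathrm{st}} = \psi_0^*\chi^*\omega_{\mathrm{st}} = \psi_0^*\omega_1 = \omega$.

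Finally, the moment map identity comes for free. Both $\Phi$ and $\Phi_{\mathrm{lin}}\circ\Psi$ are moment maps for the $T$-action on $(U,\omega)$---the latter because $\Phi_{\mathrm{lin}}$ is a moment map for the standard action on $(V,\omega_{\mathrm{st}})$ and $\Psi$ is a $T$-equivariant symplectomorphism---so for every $\xi\in\mathfrak{t}$ the function $\langle\Phi-\Phi_{\mathrm{lin}}\circ\Psi,\xi\rangle$ has vanishing differential and is therefore locally constant. As $U$ is connected, $\Phi-\Phi_{\mathrm{lin}}\circ\Psi$ is a constant element of $\mathfrak{t}^*$; evaluating at $p$ and using $\Psi(p)=0$ together with the normalization $\Phi_{\mathrm{lin}}(0)=\Phi(p)$ built into \eqref{eq:4} shows this constant vanishes, whence $\Phi = \Phi_{\mathrm{lin}}\circ\Psi$, as required.
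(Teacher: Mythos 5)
Your proof is correct, but it takes a genuinely different route from the paper: the paper does not prove Theorem \ref{thm:lnf} at all, instead stating it as a (very) special case of the Marle--Guillemin--Sternberg local normal form for Hamiltonian actions of compact Lie groups and citing those works. What you give is the standard proof of the equivariant Darboux theorem specialized to a fixed point, and all the steps check out: the isotropy representation is unitary for the invariant Hermitian structure coming from a $T$-invariant compatible $J$, hence intertwined with the diagonal action on $(\C^n,\omega_{\mathrm{st}})$ with weights $\alpha_1,\ldots,\alpha_n$ by a symplectic $\C$-linear map; the geodesic exponential map of the invariant metric $\omega(\cdot,J\cdot)$ transports the action to this linear action, so small balls about $0$ are $T$-invariant and both forms in the Moser interpolation are $T$-invariant and agree at the origin; the Moser vector field, built from an averaged primitive vanishing to second order at $0$, is $T$-invariant and vanishes at $0$, so its time-one flow exists near $0$, is equivariant, and fixes the origin; and the moment map identity is then automatic, since two moment maps for the same action on a connected neighborhood differ by a constant in $\mathfrak{t}^*$, which is killed by the normalization $\Phi_{\mathrm{lin}}(0)=\Phi(p)$ built into \eqref{eq:4}. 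What your argument buys is self-containedness: it makes transparent that at a \emph{fixed} point none of the slice-theorem machinery behind the general Marle--Guillemin--Sternberg result (the model $G\times_{H}\left(\mathfrak{h}^{\circ}\times V\right)$ and the attendant reduction arguments) is needed---only compactness of $T$, for averaging the metric and the primitive. What the paper's citation buys is brevity, and a statement that remains valid for non-abelian groups and along non-fixed orbits, generality the paper does not need here.
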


Next we state without proof the following basic, yet important, result (see \cite[Lemma 5.53]{mcduff}).

\begin{lemma}\label{lemma:properties}
  Let $\comp$ be a Hamiltonian $T$-space. For any closed $H \subset
  T$, each connected components of the set of points that are
  fixed by $H$ is a symplectic submanifold of
  $\sm$.
\end{lemma}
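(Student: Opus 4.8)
The plan is to work pointwise at each $p$ in the fixed set $M^H := \{q \in M : h \cdot q = q \text{ for all } h \in H\}$, linearize the $H$-action there, and then transport the resulting linear-algebraic picture to a neighborhood of $p$ by means of an equivariant exponential map. Since $T$ is compact, averaging produces a $T$-invariant Riemannian metric $g$ together with a $T$-invariant $\omega$-compatible almost complex structure $J$, normalized so that $g = \omega(\cdot, J\cdot)$; as $H \subset T$, both $g$ and $J$ are $H$-invariant. Because $H$ fixes $p$, it acts linearly on $T_pM$ through the isotropy representation, and this action commutes with $J_p$ and preserves $\omega_p$ and $g_p$.

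First I would show that $M^H$ is a smooth submanifold with $T_p(M^H) = (T_pM)^H$, the subspace fixed by the linear $H$-action. The exponential map $\exp_p^g \colon T_pM \to M$ of the $H$-invariant metric $g$ is $H$-equivariant, so for small $v$ the point $\exp_p^g(v)$ is fixed by $h \in H$ precisely when $v$ is fixed by the linear action of $h$; hence $\exp_p^g$ carries a small ball in the fixed subspace $(T_pM)^H$ diffeomorphically onto a neighborhood of $p$ in $M^H$, and sends vectors outside $(T_pM)^H$ to points outside $M^H$. This exhibits $M^H$ near $p$ as an embedded submanifold whose tangent space at $p$ is exactly $(T_pM)^H$.

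Next I would verify that $(T_pM)^H$ is a symplectic subspace of $(T_pM, \omega_p)$, which gives that $\omega$ restricts to a nondegenerate (and automatically closed) $2$-form on $M^H$. The key point is that $(T_pM)^H$ is $J_p$-invariant: if $v$ is fixed by $H$ then, since $J_p$ commutes with the $H$-action, $J_p v$ is fixed as well. For a $J_p$-invariant subspace $V$, nondegeneracy of $\omega_p|_V$ is immediate, since for nonzero $v \in V$ one has $\omega_p(v, J_p v) = g_p(v,v) > 0$ with $J_p v \in V$. Applying this to $V = (T_pM)^H = T_p(M^H)$ shows each connected component of $M^H$ is a symplectic submanifold; note that the dimension of $(T_pM)^H$ may vary across components, but this is harmless, as the statement only concerns one component at a time.

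The routine pieces are the averaging constructions and the nondegeneracy computation; the step requiring the most care is the first one, namely promoting the linear identity $T_p(M^H) = (T_pM)^H$ to the assertion that $M^H$ is genuinely an embedded submanifold near $p$. This is where the $H$-equivariance of $\exp_p^g$ does the essential work, reducing the local structure of the fixed set to that of the fixed subspace of a linear action. An alternative would be to invoke a local normal form for the $H$-action analogous to Theorem \ref{thm:lnf} (which is stated there only for the full torus $T$ at its own fixed points); I would avoid this in favor of the self-contained exponential-map argument.
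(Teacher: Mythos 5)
Your proof is correct; the paper states this lemma without proof, citing \cite[Lemma 5.53]{mcduff}, and your argument---averaging to obtain a $T$-invariant compatible pair $(g,J)$, using the $H$-equivariant exponential map to identify $M^H$ near $p$ with a ball in the fixed subspace $(T_pM)^H$, and using $J_p$-invariance of that subspace to get nondegeneracy of $\omega_p$ on it---is precisely the standard argument given in that reference. Nothing further is needed.
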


We conclude this section by recalling two results (without proof) concerning {\em closed} Hamiltonian
$T$-spaces. The first one is the well-known milestone due to Atiyah, Guillemin and Sternberg (see
\cite{atiyah,gs}). 

\begin{thm}\label{thm:conn_conv}
  Let $\comp$ be a closed Hamiltonian $T$-space. Then the fibers of $\Phi$
  are connected and $\Phi(M) \subset \mathfrak{t}^*$ is the convex
  hull of the image of the connected components of $M^T$.
\end{thm}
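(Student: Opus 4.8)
The statement is the Atiyah--Guillemin--Sternberg convexity theorem, and the plan is to follow the Morse-theoretic strategy of Atiyah. The starting point is to analyse, for each $\xi \in \mathfrak{t}$, the component $\Phi^{\xi} := \langle \Phi, \xi \rangle : M \to \R$, which by definition generates the flow of $X_{\xi}$; thus its critical set equals the zero set of $X_{\xi}$, i.e. the fixed point set of the subtorus $\overline{\exp(\R\xi)} \subseteq T$. Applying the local normal form (Theorem \ref{thm:lnf}) and reading off \eqref{eq:4}, near such a critical submanifold $\Phi^{\xi}$ is, up to the additive constant $\langle \Phi(p),\xi\rangle$, the quadratic $\tfrac12\sum_j \langle \alpha_j,\xi\rangle |z_j|^2$ in the normal directions, the sum running over the isotropy weights $\alpha_j$ not annihilated by $\xi$. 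Hence $\Phi^{\xi}$ is Morse--Bott and, because the normal space splits into complex lines, both the index and the coindex of every critical submanifold are even. I would record this observation first, together with the remark that for generic $\xi$ the critical set is exactly $M^T$.

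Next I would treat the circle case. The key input is the standard Morse-theoretic fact that a Morse--Bott function on a compact connected manifold having no critical submanifold of index or coindex equal to $1$ attains its minimum and maximum on connected critical submanifolds and has connected level sets; I would apply it to $\Phi^{\xi}$ using the parity established above. This already proves the theorem when $\dim T = 1$, since then $\Phi = \Phi^{\xi}$ (up to scale), its fibers are the level sets, and its image is the interval $[\min \Phi^{\xi}, \max\Phi^{\xi}]$, whose endpoints are attained on components of $M^T$.

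To obtain connectedness of the fibers of $\Phi$ in general I would induct on $\dim T =: k$, writing $\Phi = (\Phi_1,\dots,\Phi_k)$ in a basis and letting $\Phi' = (\Phi_1,\dots,\Phi_{k-1})$ be the moment map of the corank-one subtorus. The inductive hypothesis gives that the fibers $N = (\Phi')^{-1}(c')$ are connected, and since $\Phi^{-1}(c)=N \cap \Phi_k^{-1}(c_k)$ it suffices to show that $\Phi_k$ restricts to a function with connected level sets on each such $N$. I expect this to be the main obstacle: $N$ need not be smooth, so the Morse--Bott argument cannot be applied to $\Phi_k|_N$ directly. I would resolve this as Atiyah does, by perturbing $\xi_k$ to a generic rational direction for which the relevant restriction is amenable to Morse theory on the smooth strata, and then recovering the general case by a density and limiting argument that uses compactness of the fibers.

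Finally, I would deduce convexity and the precise description of the image from connectedness of the fibers. For each $\xi$ the level-set connectedness forces $\Phi(M)\cap \{\eta : \langle \eta,\xi\rangle = t\}$ to be connected, and, combined with local convexity of the image near each point, again read off from the cone model in \eqref{eq:4}, this shows that $\Phi(M)$ is a closed, connected, locally convex subset of $\mathfrak{t}^*$, hence convex by the Tietze--Nakajima theorem. Since for generic $\xi$ the extrema of $\Phi^{\xi}$ are attained on components of $M^T$, the supporting hyperplanes of $\Phi(M)$ touch it precisely at images of fixed components, whence $\Phi(M)$ is exactly the convex hull of the image of the connected components of $M^T$, as claimed.
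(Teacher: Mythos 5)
The paper does not actually prove this statement: it is the Atiyah--Guillemin--Sternberg convexity theorem, quoted as known with references to \cite{atiyah,gs}. Your proposal is therefore measured against the classical proofs, and what you outline is Atiyah's Morse-theoretic argument. The skeleton is correct: $\Phi^{\xi}$ is Morse--Bott with critical set the fixed set of $\overline{\exp(\R\xi)}$ and all indices and coindices even; the connectedness lemma for such functions settles $\dim T = 1$; induction on $\dim T$ gives connectedness of all fibers; and the convex-hull description follows because for generic $\xi$ the critical set of $\Phi^{\xi}$ is exactly $M^T$, so the support function of $\Phi(M)$ agrees with that of the convex hull of $\Phi(M^T)$ on a dense set of directions. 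Deferring the singular-value step of the induction to ``Atiyah's perturbation and limiting argument'' is acceptable in an outline, but be aware that this is the technical heart of the theorem, not a routine detail.

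There is, however, a genuine gap in your final paragraph. First, connectedness of the hyperplane slices $\Phi(M) \cap \{\eta : \langle \eta,\xi\rangle = t\}$ does not imply convexity once $\dim T \geq 3$: the sphere $S^2 \subset \R^3$ is compact, connected, and meets every plane in a connected set (a circle, a point, or nothing), yet it is not convex. Convexity needs connected slices by \emph{lines}, and those come from applying your inductive connectedness statement to subtori of every dimension, not just to circles. Second, the claim that local convexity of the image is ``read off from the cone model in \eqref{eq:4}'' is unjustified: \eqref{eq:4} and Theorem \ref{thm:lnf} give a normal form only at \emph{fixed points}, whereas local convexity of $\Phi(M)$ at a value $v$ concerns the image of a neighborhood of the entire compact fiber $\Phi^{-1}(v)$, which is a finite union of local images attached at different points of the fiber; a finite union of convex cones need not be convex, and proving that this union is convex is precisely the content of the nontrivial local-global convexity principle (Condevaux--Dazord--Molino, Hilgert--Neeb--Plank, Sjamaar), which also requires the general Marle--Guillemin--Sternberg normal form at non-fixed points. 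The clean repair stays inside your own framework and is what Atiyah does: from fiber connectedness for every subtorus, the intersection of $\Phi(M)$ with every rational affine subspace, in particular every rational line, is connected; hence the segment between any two image points in a rational direction lies in $\Phi(M)$, and density of rational directions together with compactness of $\Phi(M)$ gives convexity, with no appeal to Tietze--Nakajima.
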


\begin{rmk}\label{rmk:poly}
  Observe that, under the hypotheses of Theorem \ref{thm:conn_conv}, $M^T$ has finitely many connected
  components. Thus the moment map image of a closed Hamiltonian $T$-space
  is a convex compact polytope in $\mathfrak{t}^*$. 
\end{rmk}

The last result is a special case of a theorem of Li
(see \cite{li}).

\begin{theorem}\label{thm:fund_group}
  Let $\comp$ be a closed Hamiltonian $T$-space. For any $\alpha \in
  \Phi(M)$, $\pi_1(M) \cong \pi_1(M_\alpha)$, where $M_\alpha= \Phi^{-1}(\alpha)/T$ denotes
  the reduced space at $\alpha$. 
\end{theorem}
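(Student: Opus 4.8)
The plan is to compare both $\pi_1(M)$ and $\pi_1(M_\alpha)$ with the fundamental group of a level set of $\Phi$, and to reduce the torus case to that of a circle by reducing one circle subgroup at a time. Concretely, I would first reduce to the situation where $\alpha$ lies in the interior of the polytope $\Phi(M)$ (Remark \ref{rmk:poly}) and is a regular value; the general case is recovered at the end. For a circle action, $Z := \Phi^{-1}(\alpha)$ is then a connected (Theorem \ref{thm:conn_conv}) closed hypersurface on which $S^1$ acts locally freely, $M_\alpha = Z/S^1$ is a symplectic orbifold, and $\pi\colon Z \to M_\alpha$ is an orbifold principal $S^1$-bundle; write $\iota\colon Z \hookrightarrow M$ for the inclusion. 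For a torus $T$ of rank $k$, I would pick a circle $S \subset T$, form the reduced space $M'$ of $M$ by $S$ (a closed Hamiltonian $T/S$-orbifold of rank $k-1$), observe that $M_\alpha$ is a reduced space of $M'$, and induct on $k$; thus everything comes down to the circle case below. The goal becomes to prove that $\iota_*\colon \pi_1(Z)\to\pi_1(M)$ and $\pi_*\colon\pi_1(Z)\to\pi_1(M_\alpha)$ are both surjective with the \emph{same} kernel, namely the normal subgroup $N\trianglelefteq\pi_1(Z)$ generated by the class of a principal $S^1$-orbit; granting this, $\pi_1(M)\cong\pi_1(Z)/N\cong\pi_1(M_\alpha)$.

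The assertion about $\pi_*$ is the elementary half: the long exact homotopy sequence of the bundle $S^1\hookrightarrow Z\xrightarrow{\pi}M_\alpha$ gives $\pi_1(S^1)\to\pi_1(Z)\xrightarrow{\pi_*}\pi_1(M_\alpha)\to\pi_0(S^1)=0$, so $\pi_*$ is onto with kernel the image of $\pi_1(S^1)$, which is exactly $N$ (the finite isotropy in the orbifold case only forces one to work with orbifold homotopy groups and does not affect this). The substance is the statement for $\iota_*$, which I would extract from Morse theory of $\Phi$ itself. Using the identification in the local normal form (Theorem \ref{thm:lnf}), near a fixed point with isotropy weights $\alpha_1,\dots,\alpha_n$ (Definition \ref{defn:iso_weights}) the function $\Phi$ equals $\tfrac12\sum_j \alpha_j\abs{z_j}^2 + \mathrm{const}$; hence $\Phi$ is Morse--Bott with critical set $M^{S^1}=M^T$, and the negative normal bundle of each critical component is the $J$-complex subbundle spanned by the coordinates with $\alpha_j<0$. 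In particular every critical submanifold has \emph{even} index, so there are no critical submanifolds of index $1$, and by Theorem \ref{thm:conn_conv} the extrema of $\Phi$ are connected.

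From the absence of index-$1$ critical submanifolds, the standard Morse-theoretic analysis of how $\pi_1$ changes across critical values shows that $\iota_*$ is a \emph{surjection}. The crux of the whole argument is the identification $\ker\iota_* = N$: a priori the index-$2$ handles of $\Phi$ \emph{can} kill elements of $\pi_1$, and one must show that the only relations they impose are carried by $S^1$-orbits. I would establish this by proving $\pi_1(M)\cong\pi_1(F_{\min})$ for the minimal fixed component $F_{\min}$, and then matching it with the reduced space just above the minimum, which by the local normal form is a (weighted) projective bundle over $F_{\min}$ and therefore has $\pi_1\cong\pi_1(F_{\min})$; the vanishing cycles produced along the way are precisely the orbits generating $N$. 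This kernel computation is where I expect the real difficulty to lie. Finally, to remove the regularity and interior assumptions on $\alpha$, I would use that reduced spaces on either side of a wall differ by a symplectic blow-up or blow-down, an operation preserving $\pi_1$, so that $\pi_1(M_\alpha)$ is independent of $\alpha\in\Phi(M)$; a limiting argument then handles values on $\partial\Phi(M)$, including the vertices, where the reduced space degenerates to a fixed component $F$ and the extremal computation $\pi_1(M)\cong\pi_1(F)$ applies.
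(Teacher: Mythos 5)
The paper does not actually prove this statement: it is quoted as a special case of a theorem of Li (see \cite{li}), so the only meaningful comparison is with Li's argument --- and your outline reproduces Li's skeleton almost exactly (circle case via Morse theory of the moment map, torus case by reduction in stages, constancy of $\pi_1$ of the reduced spaces across walls, identification of the reduced space just above the minimum as a weighted projective bundle over $F_{\min}$). The problem is that the step you yourself flag as ``where I expect the real difficulty to lie'' is precisely the content of the theorem, and your proposal contains no mechanism for proving it. Saying you would establish $\ker\iota_*=N$ ``by proving $\pi_1(M)\cong\pi_1(F_{\min})$'' is circular as written: Morse--Bott theory with only even indices gives the surjection $\pi_1(F_{\min})\twoheadrightarrow\pi_1(M)$, but injectivity requires showing that the relation imposed when crossing each index-two critical component $F$ --- namely the killing of $j_*(c_F)$, where $c_F$ is the fiber class of the circle bundle $S(\nu^-F)\to F$ and $j$ is the attaching map into the sublevel set --- is vacuous, i.e.\ that $j_*(c_F)$ is already trivial there. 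The missing idea is the equivariant one: $j(c_F)$ is, as a subset, a single $S^1$-orbit; the negative gradient flow of the moment map is $S^1$-equivariant, so it carries this loop through orbits converging to a fixed point in a lower critical component, where the local normal form lets one contract it inside a small ball contained in the sublevel set. Without this (or some substitute) argument, neither $\pi_1(M)\cong\pi_1(F_{\min})$ nor $\ker\iota_*=N$ is established, and the proof collapses at its declared crux.

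Two further points are glossed over. First, the theorem concerns the ordinary fundamental group of the topological space $M_\alpha=\Phi^{-1}(\alpha)/T$, not the orbifold fundamental group: when $Z\to M_\alpha$ has exceptional orbits, $\ker\pi_*$ is normally generated by the classes of \emph{all} orbits, not just a principal one, and $\pi_1^{\mathrm{orb}}(M_\alpha)$ genuinely differs from $\pi_1(M_\alpha)$ in general, so your parenthetical remark that finite isotropy ``does not affect this'' hides a real (if fixable) mismatch between the two kernels you intend to equate. Second, both the wall-crossing claim (that reduced spaces on either side of a critical value differ by a weighted blow-up/blow-down, an operation preserving $\pi_1$) and the reduction-in-stages induction (which forces you to redo the entire circle-case machinery for Hamiltonian orbifolds, including connectedness of level sets and the local normal form) are substantial lemmas in their own right rather than facts one may simply invoke; Li's paper spends most of its length on exactly these points.
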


\section{Some properties of (closed) complexity one spaces}\label{sec:some-prop-clos}
In this section, we specialize to (closed) 
complexity one spaces and
we prove results that are needed in the proofs of Section
\ref{sec:proof-main-result}. Throughout this section, $\comp$ denotes
a complexity one space unless otherwise stated. 

\subsection{Fixed surfaces}\label{sec:fixed-surfaces}
We begin by showing that the complexity of 
$\comp$ being one restricts the possible dimensions of the connected
components of $M^T$.

\begin{corollary}\label{cor:connected}
  Let $\comp$ be a complexity one space. The connected components
  of $M^T$ are either points or symplectic surfaces.
\end{corollary}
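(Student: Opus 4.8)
The plan is to read the dimension of a fixed component directly off the isotropy weights and then to bound the number of vanishing weights by exploiting the injectivity of the slice representation. Write $2n = \dim M$ and $d = \dim T$, so that the complexity-one hypothesis (Definition \ref{defn:complexity}) reads $n = d+1$. First I would invoke Lemma \ref{lemma:properties} with $H = T$ to conclude that every connected component $F$ of $M^T$ is a symplectic submanifold of $\sm$, hence even-dimensional; it then remains to prove $\dim_{\R} F \le 2$.

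Next, fix $p \in F$ and let $\alpha_1,\ldots,\alpha_n \in \ell^*$ be the isotropy weights at $p$ (Definition \ref{defn:iso_weights}). Via the local normal form (Theorem \ref{thm:lnf}), a $T$-invariant neighbourhood of $p$ is $T$-equivariantly symplectomorphic to a neighbourhood of $0$ in $(\C^n, \omega_{\mathrm{st}})$ carrying the weighted diagonal action, under which $T_pF = (T_pM)^T$ is identified with the sum of those coordinate lines $\C_j$ on which $T$ acts trivially, i.e. precisely the $j$ with $\alpha_j = 0$. Setting $k := \#\{\,j : \alpha_j = 0\,\}$, this gives $\dim_{\R} F = 2k$, so the statement reduces to the inequality $k \le 1$.

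The crux is the following observation: the vanishing weights contribute nothing to the $\Z$-span of $\alpha_1,\ldots,\alpha_n$, so the \emph{nonzero} weights alone already span that $\Z$-span. By the first bullet of Remark \ref{rmk:iso_weights}, the full collection of weights spans $\ell^*$, which is a free abelian group of rank $d$. Hence the $n - k$ nonzero weights span a rank-$d$ lattice, which forces $n - k \ge d$, i.e. $k \le n - d = 1$. Therefore $\dim_{\R} F = 2k \le 2$, and combined with the even-dimensional symplectic structure from the first paragraph this leaves exactly the two possibilities: $F$ is a point ($k=0$) or a symplectic surface ($k=1$).

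The only genuinely substantive step—the one I would write out most carefully—is this last linear-algebra point, that a free abelian group of rank $d$ cannot be spanned by fewer than $d$ elements, applied to the nonzero weights; everything else is a direct translation through the local normal form. I would also take care to emphasize that the injectivity of the slice representation is precisely what prevents $k$ from exceeding $1$, and that the surface case $k=1$ genuinely occurs, so the dichotomy is sharp.
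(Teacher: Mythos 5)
Your proposal is correct and takes essentially the same approach as the paper: reduce to the linear model via Lemma \ref{lemma:properties} and Theorem \ref{thm:lnf}, identify $T_pF$ with the sum of the zero-weight coordinate lines, and use injectivity of the slice representation together with $n = \dim T + 1$ to conclude that at most one weight vanishes. Your lattice-rank count (the nonzero weights must generate $\ell^*$, which has rank $\dim T$) is just an expanded, explicit version of the paper's one-line observation that injectivity of $\rho$ forces $\dim T_pF$ to be $0$ or $2$.
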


\begin{proof}
  Let $F \subset M^T$ be a connected component and fix $p \in F$. By Lemma
  \ref{lemma:properties}, we know that $F$ is a symplectic submanifold
  so we only have to bound its dimension. By
  Theorem \ref{thm:lnf}, it suffices to consider the linear model,
  i.e., $p
  = 0 \in \C^n$ and the $T$-action is given by an injective
  representation $\rho: T \to (S^1)^n$. Observe that $T_p F =
\bigcap\limits_{t \in T} \ker (\mathrm{id} - \rho(t))$ (see
\cite[proof of Lemma 5.53]{mcduff}). Since $\rho$ is injective and $n
= \dim T +1$, it
follows that $\dim T_pF$ equals either $0$ or $2$. 
\end{proof}

Next we deduce properties of the linear model at (and, hence, of the
local behavior of the $T$-action near) a fixed point lying on a
2-dimensional connected component of $M^T$. Such a connected component
is henceforth referred to as a {\em fixed surface}. Fix such a surface $\Sigma \subset M^T$ and a point $p \in
\Sigma$. As in the proof of Corollary \ref{cor:connected}, assume, without
loss of generality, that $M = \C^n$, $p = 0$ and the $T$-action is given by an injective
representation $\rho: T \to (S^1)^n$. Since $p$ lies on a fixed
surface, it follows that one of the isotropy weights of the action is
0, i.e., the $T$-action fixes a complex line in
$\C^n$. Therefore, there exists an isomorphism $\C^n \cong \C \times
\C^{n-1}$ so that for all $t \in T$ and all $(z,w) \in \C
\times \C^{n-1}$, 
\begin{equation}
  \label{eq:2}
  t \cdot (z,w) = (z, \hat{\rho}(t)w)
\end{equation}
\noindent
for some isomorphism $\hat{\rho}
: T \to (S^1)^{n-1}$. In what follows, we ignore the zero isotropy weight at $p$ and refer
to $\alpha_1,\ldots, \alpha_{n-1} \in \ell^*$ as the isotropy weights
at $p$.

\begin{rmk}\label{rmk:compl_0}
  In the above description, for any $z_0 \in \C$, the subspace
  $\{z_0\} \times \C^{n-1}$ is a symplectic subspace of $\C \times
  \C^{n-1}$ whose induced symplectic form is denoted by $\omega_0$
  (and is symplectomorphic to the standard symplectic form on
  $\C^{n-1}$). Let $\Phi_0$ denote the moment map of the effective
  Hamiltonian $T$-action on $(\{z_0\} \times \C^{n-1} , \omega_0 )$
  given by restricting the $T$-action on $\C^n$. The complexity of the
  Hamiltonian $T$-space $\left(\{z_0\} \times
    \C^{n-1}, \omega_0, \Phi_0\right)$ is zero, i.e., $T$ acts on
  $(\{z_0\} \times \C^{n-1} , \omega_0 )$ in a toric fashion.
\end{rmk}

In the following simple result, we use the above discussion in the
case in which $\comp$ is a closed complexity one space.

\begin{lemma}\label{lemma:fixed_surfaces}
  Let $\comp$ be a closed complexity one space and let $\Sigma \subset
  M^T$ be a fixed surface. Then $\Sigma$ is the preimage of a vertex
  of $\Phi(M)$. 
\end{lemma}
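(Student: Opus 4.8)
The plan is to determine $\Phi(\Sigma)$ and the local shape of $\Phi(M)$ near it by means of the linear model at a point of $\Sigma$, and then to upgrade this local picture to the desired global statement using connectedness of the fibers of $\Phi$ together with compactness of $M$. I first note that $\Phi$ is constant on $\Sigma$: since every point of $\Sigma$ is fixed, the vector fields $X_\xi$ vanish along $\Sigma$ for all $\xi \in \mathfrak{t}$, so $\diff \langle \Phi, \xi\rangle = \iota_{X_\xi}\omega$ vanishes on $T\Sigma$, and connectedness of $\Sigma$ forces $\Phi|_\Sigma \equiv \alpha$ for some $\alpha \in \mathfrak{t}^*$.

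Next I fix $p \in \Sigma$ and pass to the linear model of Theorem \ref{thm:lnf}, writing $\C^n \cong \C \times \C^{n-1}$ with $t \cdot (z,w) = (z, \hat{\rho}(t)w)$ as in \eqref{eq:2}. Because $\hat{\rho} : T \to (S^1)^{n-1}$ is an isomorphism, the isotropy weights $\alpha_1, \ldots, \alpha_{n-1}$ form a basis of $\ell^*$; in particular they are $\R$-linearly independent and span $\mathfrak{t}^*$. Dropping the zero weight in \eqref{eq:4}, the moment map reads $\Phi_{\mathrm{lin}}(z,w) = \frac{1}{2}\sum_{j=1}^{n-1}\alpha_j|w_j|^2 + \alpha$ on the chart $U$, which yields two local facts. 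First, $\Phi_{\mathrm{lin}} = \alpha$ amounts to $\sum_j \alpha_j |w_j|^2 = 0$, and linear independence of the $\alpha_j$ forces $w = 0$; hence $\Phi^{-1}(\alpha) \cap U = (\C \times \{0\}) \cap U = \Sigma \cap U$. Second, since the $\alpha_j$ are independent, $\Phi(U) = \Phi_{\mathrm{lin}}(V)$ equals $\big(\alpha + \mathrm{cone}(\alpha_1, \ldots, \alpha_{n-1})\big) \cap N_U$ for a suitable neighborhood $N_U$ of $\alpha$, a full-dimensional simplicial, hence pointed, cone with apex $\alpha$.

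It then remains to globalize. The first local fact shows that $\Sigma$ is open in $\Phi^{-1}(\alpha)$; as $\Sigma$ is also closed in the compact manifold $M$ (being a connected component of $M^T$) and $\Phi^{-1}(\alpha)$ is connected by Theorem \ref{thm:conn_conv}, I conclude $\Sigma = \Phi^{-1}(\alpha)$. Covering the compact set $\Sigma$ by finitely many such charts $U_1, \ldots, U_k$ --- on all of which the weights agree by Remark \ref{rmk:iso_weights} --- the set $\Phi\big(M \setminus \bigcup_i U_i\big)$ is compact and avoids $\alpha$, so there is a neighborhood $N$ of $\alpha$ with $\Phi^{-1}(N) \subset \bigcup_i U_i$. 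Therefore $\Phi(M) \cap N = \big(\alpha + \mathrm{cone}(\alpha_1, \ldots, \alpha_{n-1})\big) \cap N$, and since this cone is pointed, $\alpha$ is a vertex of the polytope $\Phi(M)$ (cf. Remark \ref{rmk:poly}). Thus $\Sigma = \Phi^{-1}(\alpha)$ is the preimage of a vertex, as claimed.

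I expect the globalization to be the crux: the local normal form only constrains $\Phi$ near a single fixed point, so the real content is ruling out that other regions of $M$ map close to $\alpha$ and fatten the local cone into one without a vertex. The identification $\Sigma = \Phi^{-1}(\alpha)$ via connectedness of the fibers, combined with the compactness argument producing $N$, is precisely what closes this gap; the remaining ingredients (that the weights form a basis and the elementary cone computations) are routine.
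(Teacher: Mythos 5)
Your proof is correct and follows essentially the same route as the paper's: the linear model at points of the fixed surface, a finite cover by compactness, and connectedness of the fibers of $\Phi$ to pass from the local cone picture to a vertex of $\Phi(M)$ (the paper phrases this last step via properness of $\Phi$ and a saturated neighborhood of $\Sigma$, which is equivalent to your compactness-avoidance argument producing the neighborhood $N$). Your write-up in fact makes explicit a step the paper leaves implicit, namely the identification $\Sigma = \Phi^{-1}(\alpha)$ via the open-and-closed argument in the connected fiber.
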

\begin{proof}
  Compactness of $M$ implies compactness of $\Sigma$ as it is 
  closed. Fix $p \in \Sigma$. Theorem \ref{thm:lnf} and the above discussion imply
  that there exists an open neighborhood of $U_p$ of $p$ such that
  $\Phi(U)$ is the image of an open neighborhood $V_p$ of $ 0 \in \C^n \cong
  \C \times \C^{n-1}$ under the map 
  \begin{equation}
    \label{eq:6}
    \Phi_{\mathrm{lin}}(z,w_1,\ldots,w_{n-1}) = \frac{1}{2} \sum\limits_{j=1}^{n-1}
    \alpha_j |w_j|^2 + \Phi(p), 
  \end{equation}
  \noindent
  (cf. formula \eqref{eq:4}). Since the isotropy weights at a fixed
  surface are well-defined (see Remark \ref{rmk:iso_weights}), the
  above statement holds for all $p \in \Sigma$. Since $\Sigma$ is
  compact, there exists finitely many $p_1,\ldots, p_N$ such that
  $\Sigma$ is covered by $U_{p_1},\ldots, U_{p_N}$. Set $U :=
  \bigcup\limits_{l=1}^N U_{p_l} \subset M$ and $V:=
  \bigcap\limits_{l=1}^N V_{p_l} \subset \C^n$. Observe that $V$ is a
  $T$-invariant open neighborhood of $\C^n \cong \C \times \C^{n-1}$
  and that $\Phi_{\mathrm{lin}}(V)$ is an open subset of
  $\Phi(\Sigma) + \R_{\geq 0 } \langle
  \alpha_1,\ldots,\alpha_{n-1} \rangle$. Moreover, by construction,
  $\Phi_{\mathrm{lin}}(V) \subset \Phi(U)$. Since $\Phi$ is proper, it
  follows that $U$ contains an open neighborhood of $\Sigma$ saturated
  by the fibers of $\Phi$ which are connected.
  Hence, possibly changing $U$ with this open neighborhood $U'$ saturated by $\Phi$, the
  vertex of $\Phi(U')$, whose preimage is $\Sigma$, must be a vertex of $\Phi(M)$.
\end{proof}

\begin{rmk}\label{rmk:higher_comp}
  Lemma \ref{lemma:fixed_surfaces} can be generalized to closed
  complexity $k$ spaces
  by substituting fixed surfaces with connected components of the
  fixed point set of dimension $2k$. However, in complexity $k \geq 2$, it is not true that
  2-dimensional connected components of the fixed point set are either
  level sets of the moment map or that they are contained only in the
  preimages of vertices of the moment map image.
\end{rmk}

Using Lemma \ref{lemma:fixed_surfaces}, given a closed complexity one
space $\comp$, we characterize the
preimage of an edge of $\Phi(M)$ that is incident to a vertex whose
preimage is a fixed surface. More precisely, the following result
holds.

\begin{lemma}\label{lemma:edges}
  Let $\comp$ be a closed complexity one space and suppose that a
  vertex $v \in \Phi(M)$ is the image of a fixed surface. Then the
  preimage of any closed edge incident to $v$ is a closed
  4-dimensional symplectic submanifold of $\sm$ endowed with an 
  effective Hamiltonian $S^1$-action. 
\end{lemma}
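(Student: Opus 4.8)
The plan is to realize $\Phi^{-1}(e)$ as a connected component of the fixed point set of a codimension-one subtorus $T_e\subset T$ and to read off all the desired properties from the local model at a point of $\Sigma$. First I would recall the local picture at a point $p\in\Sigma$. By Theorem \ref{thm:lnf} and the discussion preceding Remark \ref{rmk:compl_0}, one may assume $M=\C\times\C^{n-1}$, $p=0$, with $T$ acting trivially on the $\C$-factor and with nonzero isotropy weights $\alpha_1,\ldots,\alpha_{n-1}\in\ell^*$ on $\C^{n-1}$; moreover, near $v$, $\Phi(M)$ agrees with $v+\R_{\geq0}\langle\alpha_1,\ldots,\alpha_{n-1}\rangle$ (see the proof of Lemma \ref{lemma:fixed_surfaces}). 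Since the restricted action on a slice $\{z_0\}\times\C^{n-1}$ is toric and smooth at the vertex $v$ (Remark \ref{rmk:compl_0}), and since the $\Z$-span of $\alpha_1,\ldots,\alpha_{n-1}$ is all of $\ell^*$ (Remark \ref{rmk:iso_weights}), the $n-1$ weights $\alpha_1,\ldots,\alpha_{n-1}$ form a $\Z$-basis of $\ell^*$. Consequently the above cone is simplicial and its edges are the rays $v+\R_{\geq0}\alpha_j$, so every closed edge of $\Phi(M)$ incident to $v$ is, near $v$, one of these rays, and I may assume $e$ lies along $\alpha_1$.

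Next I would introduce the subtorus. Let $T_e\subset T$ be the connected subgroup with $\mathrm{Lie}(T_e)=\ker\alpha_1\subset\mathfrak t$; since $\alpha_1$ is primitive in $\ell^*$, the sublattice $\ell\cap\ker\alpha_1$ is saturated, so $T_e$ is a codimension-one subtorus and $T/T_e\cong S^1$. Let $Z$ be the connected component of $M^{T_e}$ containing $\Sigma$. By Lemma \ref{lemma:properties}, $Z$ is a symplectic submanifold of $\sm$, and it is closed because $M$ is compact. To compute its dimension I restrict the weights to $T_e$: in the local model the $T_e$-weights at $p$ are $0$ on the $\C$-factor, $\alpha_1|_{T_e}=0$ on the first factor of $\C^{n-1}$, and $\alpha_2|_{T_e},\ldots,\alpha_{n-1}|_{T_e}$ on the remaining factors. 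As $\alpha_1,\ldots,\alpha_{n-1}$ is a basis, the restrictions $\alpha_2|_{T_e},\ldots,\alpha_{n-1}|_{T_e}$ form a basis of $\mathrm{Lie}(T_e)^*$ and in particular are all nonzero; hence the $T_e$-fixed locus near $p$ is exactly $\C\times\C\times\{0\}$, so $\dim Z=4$. The residual circle $T/T_e\cong S^1$ acts on $Z$ in a Hamiltonian fashion, with moment map the composite of $\Phi|_Z$ with the projection $\mathfrak t^*\to(\mathrm{Lie}\,T_e)^{\circ}=\R\alpha_1$; effectiveness follows because the induced weight of $T/T_e$ on the factor of $\C^{n-1}$ carrying $\alpha_1$ is the image of $\alpha_1$ in $(\ell/(\ell\cap\ker\alpha_1))^*$, which is a generator since $\alpha_1$ is primitive.

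It remains to identify $\Phi^{-1}(e)$ with $Z$, and this is the step I expect to be the crux. Because $T_e$ acts trivially on $Z$, the components of $\Phi|_Z$ pairing with $\mathrm{Lie}(T_e)$ are constant, so $\Phi(Z)\subset v+\R\alpha_1$; convexity of $\Phi(M)$ together with the fact that $e$ is the face of $\Phi(M)$ in the direction $\alpha_1$ forces $(v+\R\alpha_1)\cap\Phi(M)=e$, whence $\Phi(Z)\subseteq e$. Conversely I would show $\Phi^{-1}(e)\subseteq M^{T_e}$ by checking, via the local normal form along $e$, that points mapping to the relative interior of $e$ have stabilizer containing $T_e$. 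Then, since $\Phi$ is proper with connected fibers (Theorem \ref{thm:conn_conv}), the preimage $\Phi^{-1}(e)$ of the connected set $e$ is connected, and as it contains $\Sigma\subset Z$ it must equal the single component $Z$; this simultaneously yields $\Phi(Z)=e$.

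The genuine difficulty is precisely this isotropy-along-the-edge computation and the resulting equality $\Phi^{-1}(e)=Z$: one must rule out the preimage of $e$ meeting other components of $M^{T_e}$ and confirm that $Z$ surjects onto the \emph{whole} edge rather than a proper subsegment. The remaining assertions (closedness, $4$-dimensionality, symplecticness, and effectiveness of the $S^1$-action) then follow from the local model and Lemma \ref{lemma:properties} as above. Alternatively, one may invoke the standard cross-section/face theorem, which identifies the preimage of an open face with a component of the fixed locus of the corresponding subtorus and whose moment image is exactly that face, thereby bypassing the delicate isotropy argument.
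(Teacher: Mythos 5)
Your proposal follows essentially the same route as the paper's proof: both identify $\Phi^{-1}(e)$ with the fixed-point set of the codimension-one subtorus $\exp(\ker\alpha_i)$ by means of the linear model at a point of $\Sigma$, obtain symplecticness and smoothness from Lemma \ref{lemma:properties}, connectedness of $\Phi^{-1}(e)$ from Theorem \ref{thm:conn_conv}, and effectiveness of the residual circle action from the fact that $\alpha_1,\ldots,\alpha_{n-1}$ form a $\Z$-basis of $\ell^*$. The only cosmetic differences are that the paper realizes the circle as the subgroup $\exp(\langle a_i\rangle)\subset T$ dual to $\alpha_i$ rather than the quotient $T/T_e$, and the crux you flag---that the whole preimage of the edge is fixed by the subtorus---is handled in the paper by the same local-normal-form-plus-connectedness argument you sketch.
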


\begin{proof}
  Set $\Sigma := \Phi^{-1}(v)$. The proof of Lemma
  \ref{lemma:fixed_surfaces} shows that the image of $\Phi$ near $v$
  is contained in $\Phi(\Sigma) + \R_{\geq 0 } \langle
  \alpha_1,\ldots,\alpha_{n-1} \rangle$, where
  $\alpha_1,\ldots,\alpha_{n-1}$ are the isotropy weights at the fixed surface
  $\Sigma$. Let $e_1,\ldots, e_{n-1}$ denote the closed edges incident
  to $v$ so that, for all $i=1,\ldots, n-1$, $e_i \subset \Phi(\Sigma)
  + \R_{\geq 0} \alpha_i$. Fix $i=1,\ldots, n-1$ and a point $p \in
  \Sigma$. Using the local normal form of Theorem \ref{thm:lnf},
  together with the discussion preceding Lemma
  \ref{lemma:fixed_surfaces}, we may identify an open neighborhood $U$
  of $p$ with an open neighborhood of $0 \in \C^n \cong \C \times
  \C^{n-1}$ with $T$-action and moment map  given as in \eqref{eq:2}
  and \eqref{eq:6} respectively. Under this identification,
  $\Phi^{-1}(e_i) \cap U$ is given by the 4-dimensional subspace 
  $$ \left\{ (z,w_1,\ldots,w_{n-1}) \mid w_1 = 0, \ldots, w_{i-1} = 0,
    w_{i+1} = 0, \ldots, w_{n-1} = 0 \right\}.$$
  \noindent
  Moreover, setting $\mathfrak{h}_i:= \ker \alpha_i$ and $H_i := \exp
  \left(\mathfrak{h}_i\right)$, we have that $q \in \Phi^{-1}(e_i)
  \cap U$ if and only if it is fixed by $H_i$. (Note that the weights
  $\alpha_1,\ldots,\alpha_{n-1}$ are linearly independent since their
  $\Z$-span must be $\ell^*$, as otherwise the effectiveness of the $T$-action
  would be contradicted). Since $\Phi^{-1}(e_i)$
  is connected (as the fibers of $\Phi$, and $e_i$, are), Lemma
  \ref{lemma:properties} implies that $\Phi^{-1}(e_i)=: M_i$ is a
  4-dimensional symplectic submanifold of $\sm$ whose points are fixed
  by $H_i$. Moreover, $M_i$ is closed as it
  is the preimage of a compact subset under the proper map $\Phi$. Set $\omega_i :=
  \omega|_{M_i}$. It remains to show that $\left(M_i,\omega_i\right)$
  is endowed with an effective Hamiltonian $S^1$-action. Since $\alpha_1,\ldots,\alpha_{n-1} \in \ell^*$ are
  a basis (see Remarks \ref{rmk:iso_weights} and \ref{rmk:compl_0}),
  consider the dual basis $a_1,\ldots, a_{n-1} \in \ell$. By
  construction, $\exp\left(\langle a_i \rangle\right) \subset T$ is isomorphic
  to $S^1$ and it acts in a Hamiltonian fashion on
  $\left(M_i,\omega_i\right)$. To check that this action is effective,
  observe that, by construction, it is effective locally near $p$;
  this can be checked directly in the linear model at $p$.
\end{proof}

We conclude our discussion of fixed surfaces of closed complexity one
spaces with a simple observation regarding the case in which there
is one with positive genus.

\begin{lemma}\label{lemma:positive_genus}
  Let $\comp$ be a closed complexity one space. If there exists a
  fixed surface $\Sigma_0 \subset M^T$ whose genus $g(\Sigma_0)$ is
  positive, then, for any vertex $v \in \Phi(M)$, $\Phi^{-1}(v)$ is a
  fixed surface of genus $g(\Sigma_0)$.
\end{lemma}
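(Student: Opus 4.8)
The plan is to propagate the positive-genus condition along the edges of the polytope $\Phi(M)$, using the $4$-dimensional submanifolds produced by Lemma \ref{lemma:edges} together with the structure theory of closed Hamiltonian $S^1$-spaces in dimension $4$.

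First I would isolate the crux, a statement purely about a closed $4$-dimensional Hamiltonian $S^1$-space $(N,\sigma)$ with moment map $\mu$: if the maximum (respectively minimum) of $\mu$ is attained on a fixed surface of positive genus $h$, then so is the minimum (respectively maximum). I would prove this through the reduced spaces $N_t := \mu^{-1}(t)/S^1$, which for regular $t$ are closed symplectic $2$-orbifolds. On each chamber between consecutive critical values the diffeomorphism type of $N_t$ is constant, and crossing the critical value of an interior isolated fixed point (the only fixed components occurring in the interior, since in dimension $4$ a fixed surface forces a local extremum of $\mu$) replaces a disc in $N_t$ by a disc and therefore leaves the genus of $N_t$ unchanged; this is exactly the local picture underlying Karshon's analysis (see \cite{karshon}). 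Reading off the local normal form of Theorem \ref{thm:lnf} at the extrema, the reduced space immediately inside an extremal fixed surface is diffeomorphic to that surface, whereas immediately inside an isolated extremal fixed point it is a sphere. Since the genus of $N_t$ is the constant $h>0$ throughout the interior of $\mu(N)$, neither extremum can be an isolated point, so both are fixed surfaces of genus $h$.

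Next I would set up the propagation. Let $v_0 \in \Phi(M)$ be the vertex with $\Phi^{-1}(v_0) = \Sigma_0$, which makes sense by Lemma \ref{lemma:fixed_surfaces}. For any edge $e$ of $\Phi(M)$ incident to $v_0$, with other endpoint $v_1$, Lemma \ref{lemma:edges} gives that $M_e := \Phi^{-1}(e)$ is a closed $4$-dimensional symplectic submanifold carrying an effective Hamiltonian $S^1$-action; its moment map is the restriction to $M_e$ of the pairing of $\Phi$ with the edge direction, so its image is the segment $e$ and its two extrema are $\Phi^{-1}(v_0) = \Sigma_0$ and $\Phi^{-1}(v_1)$. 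Here $\Phi^{-1}(v_1) \subseteq M^T$ because $v_1$ is a vertex: it is exposed by functionals $\langle \cdot, \xi\rangle$ maximized exactly at $v_1$, whence $\Phi^{\xi}$ is maximized along $\Phi^{-1}(v_1)$ and $X_\xi$ vanishes there for a spanning set of $\xi \in \mathfrak{t}$. Thus $\Phi^{-1}(v_1)$ is a connected component of $M^T$, hence by Corollary \ref{cor:connected} a point or a surface. Applying the crux to $M_e$ with $h = g(\Sigma_0) > 0$ forces $\Phi^{-1}(v_1)$ to be a fixed surface of genus $g(\Sigma_0)$.

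Finally I would conclude by induction on the edge-distance from $v_0$ in the $1$-skeleton of $\Phi(M)$, which is connected since $\Phi(M)$ is a convex polytope (Theorem \ref{thm:conn_conv} and Remark \ref{rmk:poly}). Whenever a vertex is known to be the image of a fixed surface of positive genus $g(\Sigma_0)$, the previous paragraph upgrades each of its neighbors to the image of a fixed surface of the same genus, and connectedness of the $1$-skeleton reaches every vertex. I expect the main obstacle to be a clean justification of the crux, specifically the genus-invariance of $N_t$ across interior isolated fixed points and the identification of the reduced space with the extremal surface; both follow from the $4$-dimensional local normal form and Karshon's description \cite{karshon}, but must be stated carefully to account for the orbifold points that may appear in $N_t$.
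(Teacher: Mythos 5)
Your proof is correct, but it takes a genuinely different route from the paper's. The paper's proof is a two-line application of Li's theorem (Theorem \ref{thm:fund_group}): since $\Sigma_0$ is fixed by $T$, the reduced space at the vertex $v_0 = \Phi(\Sigma_0)$ is $\Sigma_0$ itself, so $\pi_1(M) \cong \pi_1(M_{v_0}) \cong \pi_1(\Sigma_0)$ is nontrivial; the same isomorphism applied at any other vertex $v$ shows $\pi_1(M_v)$ is nontrivial, which forces $\Phi^{-1}(v)$ to be a fixed surface of the same genus. You instead propagate the genus along the $1$-skeleton of $\Phi(M)$, using Lemma \ref{lemma:edges} to produce a closed $4$-dimensional Hamiltonian $S^1$-space over each edge, and Karshon's structure theory \cite{karshon} (constancy of the genus of reduced spaces across interior isolated fixed points, identification of the reduced space just inside an extremum) to transport the genus from one endpoint of the edge to the other; your verification that the preimage of the far vertex is a connected component of $M^T$, and the induction over the connected $1$-skeleton, are both sound. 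Comparing the two: the paper's argument is shorter and handles all vertices simultaneously, but it leans on Li's theorem, a nontrivial global result; yours avoids Li's theorem entirely, stays within the $4$-dimensional $S^1$-theory that the paper already invokes elsewhere (cf. the use of \cite[Lemma 2.12]{karshon} in Lemma \ref{lemma:min_fixed_surface_negative}), and yields extra local information---the reduced spaces over every edge all have genus $g(\Sigma_0)$---at the cost of the bookkeeping you correctly flag: the crux lemma on $4$-dimensional spaces, including its treatment of orbifold points and of the weight-$\pm 1$ normal bundle at an extremal surface.
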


\begin{proof}
  By Lemma \ref{lemma:fixed_surfaces}, $\Phi(\Sigma_0)$ is a vertex of
  $\Phi(M)$, say $v_0$. Then, by Theorem \ref{thm:fund_group},
  $\pi_1(M) \cong \pi_1(M_{v_0}) \cong \pi_1(\Sigma_0)$, where
  $M_{v_0}$ is the reduced space at $v_0$. By assumption, $\pi_1(\Sigma_0)$ is not trivial. Since the first isomorphism above holds for
  any vertex $v$ of $\Phi(M)$, it follows that the preimage of any
  other vertex is a fixed surface whose genus equals that of
  $\Sigma_0$ as desired.
\end{proof}

\subsection{The Duistermaat-Heckman function and its minimum}\label{sec:duist-heckm-funct}
Let $DH : \Phi(M) \to
\R$ denote the Duistermaat-Heckman function associated to a closed
complexity one space $\comp$,
namely $DH(\alpha)$ is the symplectic volume of the reduced space at
$\alpha$ (see \cite{dh}). First, we state the following result due to
Cho and Kim without proof (see \cite{ck}).

\begin{thm}\label{thm:log-concavity}
  The Duistermaat-Heckman function of a closed complexity one space
  is log-concave, i.e., $\log DH$ is a concave function.
\end{thm}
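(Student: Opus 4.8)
The plan is to exploit the fact that in complexity one the reduced spaces at regular values of $\Phi$ are symplectic surfaces, which forces $DH$ to be piecewise affine, and then to reduce log-concavity to a one-dimensional wall-crossing computation. Since $DH$ is continuous and, by the Duistermaat-Heckman theorem, restricts to a polynomial on the interior of each chamber (the complement in $\Phi(M)$ of the image of the points with nontrivial stabilizer), log-concavity is a local condition: it suffices to check that $\log DH$ is concave on each chamber and across each codimension-one wall, and for the latter it is enough to verify concavity along every line segment, on which $DH$ is a positive piecewise-linear function of one variable. On the interior of a chamber, reduction at a base point $\alpha_0$ gives that the class $[\omega_\alpha] = [\omega_{\alpha_0}] + \langle \alpha - \alpha_0, c\rangle$ varies affinely in $\alpha$, where $c$ is the Chern class of the principal bundle $\Phi^{-1}(\alpha_0) \to M_{\alpha_0}$; because complexity one forces $\dim M_\alpha = \dim M - 2\dim T = 2$, the volume $DH(\alpha) = \int_{M_\alpha}\omega_\alpha = \langle[\omega_\alpha], [M_\alpha]\rangle$ is an affine function of $\alpha$ there. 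A positive affine function is log-concave, so the chamber case is immediate, and it remains to control the slope jump across each wall.

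The second step is to reduce each wall-crossing to a four-dimensional Hamiltonian $S^1$-space by reduction in stages. Fix a wall $W$ and let $K \subset T$ be the one-dimensional subgroup that is the generic stabilizer along $W$, with $\mathfrak{k} = \mathrm{Lie}(K)$; choose a complementary subtorus $T' \subset T$ of dimension $\dim T - 1$ so that $T = T' \cdot K$ (up to finite cover) and $\mathfrak{t}^* = (\mathfrak{t}')^* \oplus \mathfrak{k}^*$. Reducing $\comp$ by $T'$ at the value $\beta \in (\mathfrak{t}')^*$ that cuts out $W$ yields, by Lemma \ref{lemma:properties} and the usual reduction argument, a closed symplectic manifold $N_\beta$ of dimension $\dim M - 2\dim T' = 4$ carrying a residual effective Hamiltonian $K \cong S^1$-action, whose further reductions coincide with those of $M$. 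Hence the Duistermaat-Heckman function $DH_{N_\beta}$ equals the restriction of $DH$ to the line through $W$ in the $\mathfrak{k}^*$-direction. Since the affine pieces of $DH$ on the two sides of $W$ agree along $W$, the jump of $\nabla DH$ across $W$ is purely normal, so the slope jump along \emph{any} transverse line is governed by the normal jump, i.e.\ by the concavity of $DH_{N_\beta}$. Thus the whole statement reduces to concavity of the Duistermaat-Heckman function of a closed four-dimensional Hamiltonian $S^1$-space.

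Finally I would prove concavity in the four-dimensional case by a direct wall-crossing computation using the local normal form. Here $DH_{N_\beta}(s)$ is the symplectic area of the reduced surface $\Phi_K^{-1}(s)/S^1$, piecewise linear in the regular value $s$, so concavity amounts to showing its slope only decreases as $s$ increases. Away from critical values the slope is constant. At an interior isolated fixed point $p$ of saddle type, Theorem \ref{thm:lnf} identifies a neighborhood with $\C^2$ carrying isotropy weights $\{a,-b\}$ with $a,b>0$, and computing the local pushforward of the standard Liouville measure shows that crossing $\Phi_K(p)$ decreases the slope by $\tfrac{1}{ab} > 0$. Extremal fixed points and fixed surfaces, which by Corollary \ref{cor:connected} and the structure of $S^1$-fixed loci in dimension four are the only remaining possibilities, govern the endpoint behavior, where $DH_{N_\beta} \to 0$. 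Consequently the slope is a nonincreasing step function of $s$, so $DH_{N_\beta}$ is concave, and recombining over all walls and all lines yields that $DH$ is concave and therefore log-concave on $\Phi(M)$.

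The main obstacle is this last step: pinning down the correct \emph{sign} of the slope jump at every interior fixed point and excluding upward jumps, in full generality for non-semifree actions (arbitrary weights $a,b$) and for any interior fixed surfaces that may occur. The cleanest route is to carry out the jump computation entirely in the linear model of Theorem \ref{thm:lnf}, where the pushforward of the Liouville measure on $\C^2$ under the $S^1$-moment map is explicit, and to verify that the contributions of all fixed points below a given level assemble into a monotonically decreasing slope. One must also justify the two structural reductions used above, namely that the reduction-in-stages identification $DH_{N_\beta} = DH|_{\{\beta\}\times\mathfrak{k}^*}$ is compatible with the piecewise-affine structure of $DH$, and that concavity along generic lines (which meet only codimension-one walls, transversally) suffices to conclude concavity of the continuous piecewise-affine function $DH$.
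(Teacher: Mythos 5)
First, a point of calibration: the paper does not prove Theorem \ref{thm:log-concavity} at all --- it is stated explicitly ``without proof'' and attributed to Cho and Kim \cite{ck}. So your proposal cannot be compared against an internal argument, only against the literature. That said, your overall strategy --- show that in complexity one the Duistermaat--Heckman function is piecewise \emph{affine} on chambers, reduce the wall-crossing analysis to a four-dimensional Hamiltonian $S^1$-space by reduction in stages, and verify there that the slope only decreases across interior critical values --- is the standard route to this result, and the statement you are actually proving (that $DH$ is concave, which for a positive continuous piecewise-affine function is equivalent to log-concavity, since within chambers $\log$ of a positive affine function is concave and at walls both conditions reduce to the one-sided slopes decreasing) is true. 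Your local computation of the slope jump $-1/(ab)$ at a saddle with weights $\{a,-b\}$ is also correct.

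There are, however, genuine gaps as written. The central one is the assertion that reducing by the complementary subtorus $T'$ at a value $\beta$ ``yields a closed symplectic manifold $N_\beta$.'' In general $T'$ acts only \emph{locally freely} on the corresponding level set, even for generic $\beta$, so $N_\beta$ is a compact symplectic \emph{orbifold}. Consequently you may not invoke Theorem \ref{thm:lnf}, which is stated for manifolds; and the appeal to Corollary \ref{cor:connected} is a misapplication in any case, since that corollary concerns the fixed-point set $M^T$ of the original complexity one space, not the fixed loci of an auxiliary four-dimensional reduced space. Every ingredient of your third step --- the local model at an interior fixed point, the claim that interior fixed components are isolated saddles while fixed surfaces occur only at the extrema (this itself needs an argument, e.g.\ openness of $\Phi$ onto its image), and the jump computation --- must be redone in the orbifold category, with local models of the form $\C^2/\Gamma$ and correspondingly modified weights. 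These repairs are known to be possible, but the proof does not go through as stated. A cleaner variant avoids reduction in stages altogether: apply the Guillemin--Lerman--Sternberg jump formula directly on $M$. In complexity one, an interior codimension-one wall is the image of a component $X\subset M^K$ of codimension four carrying two normal $K$-weights of opposite sign (codimension-two components of $M^K$ map to the boundary of $\Phi(M)$, again by openness of $\Phi$ onto its image, which is also why $DH$ is continuous on the interior), and the leading term of the jump of $DH$ across such a wall is linear in the transverse parameter with negative coefficient proportional to $1/(ab)$ times the Duistermaat--Heckman function of $X$; this yields the decrease of the normal slope at every wall, and your final genericity-plus-continuity argument then finishes the proof. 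Alternatively, once the orbifold issue is handled, the four-dimensional concavity you need is essentially contained in Karshon's memoir \cite{karshon} --- the same circle of results (Lemma 2.12 there) that the paper itself uses in the proof of Lemma \ref{lemma:min_fixed_surface_negative} --- so it could simply be cited rather than reproved.
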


Combining Theorems \ref{thm:conn_conv} and \ref{thm:log-concavity}, we
obtain the following:

\begin{corollary}\label{cor:min_DH}
  The minimum of the Duistermaat-Heckman function of a closed
  complexity one space is attained at a vertex of the moment map image.
\end{corollary}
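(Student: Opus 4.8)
The plan is to combine the convexity statement of Theorem \ref{thm:conn_conv} with the log-concavity of Theorem \ref{thm:log-concavity}, exploiting the elementary fact that a concave function on a convex polytope attains its minimum at a vertex. More precisely, by Remark \ref{rmk:poly} the image $P := \Phi(M)$ is a convex compact polytope in $\mathfrak{t}^*$, so it is the convex hull of its finitely many vertices. By Theorem \ref{thm:log-concavity} the function $\log DH$ is concave on $P$. Since $DH$ is positive (it is a symplectic volume of a nonempty reduced space over the interior, and more care is needed on the boundary), minimizing $DH$ is equivalent to minimizing $\log DH$, because $\log$ is strictly increasing. Thus it suffices to show that a concave function attains its minimum over a polytope at one of its vertices.

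For the latter, I would argue as follows. Let $f := \log DH$ and let $\alpha_0 \in P$ be a point at which $f$ attains its minimum over $P$ (such a point exists by continuity of $DH$ and compactness of $P$; one should note that $DH$ is continuous on all of $P$, which is standard for closed Hamiltonian spaces). Write $\alpha_0$ as a convex combination of the vertices $v_1, \ldots, v_N$ of $P$, say $\alpha_0 = \sum_{k=1}^N t_k v_k$ with $t_k \geq 0$ and $\sum_k t_k = 1$. Concavity of $f$ gives
\begin{equation*}
  f(\alpha_0) \geq \sum_{k=1}^N t_k\, f(v_k) \geq \min_{1 \leq k \leq N} f(v_k).
\end{equation*}
Since $\alpha_0$ is a global minimum, the reverse inequality $f(\alpha_0) \leq f(v_k)$ holds for every $k$, and in particular $f(\alpha_0) \leq \min_k f(v_k)$. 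Hence $f(\alpha_0) = \min_k f(v_k)$, so the minimum of $f$ — and therefore of $DH$ — is attained at some vertex $v_k$ of $P$.

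The argument is almost entirely formal once the two cited theorems are in hand, so I do not expect a serious obstacle at the level of the convexity logic. The one point that requires genuine care is the behavior of $DH$, and hence of $\log DH$, on the boundary of $P$: a priori $DH$ could degenerate (tend to $0$, forcing $\log DH \to -\infty$) as one approaches lower-dimensional faces, and one must make sure that the minimum is still attained and that the concavity statement of Theorem \ref{thm:log-concavity} is understood to hold on the closed polytope $P$ in a way compatible with this. Since a concave function on a compact convex set extends upper-semicontinuously and its infimum is still governed by the vertices, the conclusion survives even in the degenerate case; but this is the step where I would be most careful to invoke the precise form of Theorem \ref{thm:log-concavity}. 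Everything else reduces to the standard fact that a concave function on a polytope is minimized at an extreme point.
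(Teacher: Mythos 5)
Your proposal is correct and follows essentially the same route as the paper: combine the log-concavity of $DH$ (Theorem \ref{thm:log-concavity}) with the convexity and compactness of $\Phi(M)$ (Theorem \ref{thm:conn_conv}), and invoke the elementary fact that a concave function on a compact convex polytope attains its minimum at a vertex, which you prove via Jensen's inequality rather than cite. Your additional caveat about possible degeneration of $DH$ (and hence $\log DH \to -\infty$) on the boundary is a point the paper's own one-line proof does not even mention, so if anything your write-up is the more careful of the two.
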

\begin{proof}
  Let $\comp$ be a closed complexity one space and let $DH :
  \Phi(M) \to \R$ denote its Duistermaat-Heckman function. Theorem
  \ref{thm:log-concavity} asserts that $\log DH$ is concave. Thus, to
  prove the result, it suffices to show that $\log DH$ attains its
  minimum at a vertex of $\Phi(M)$. This follows at once by convexity
  of $\Phi(M)$ (see Theorem \ref{thm:conn_conv}), since a concave
  function on a compact convex polytope must attain its minimum at a vertex. 
\end{proof}

The next result 
describes a topological restriction on a fixed surface whose image
corresponds to the minimum of the Duistermaat-Heckman function (see
Lemma \ref{lemma:fixed_surfaces} and Corollary \ref{cor:min_DH}).

\begin{lemma}\label{lemma:min_fixed_surface_negative}
  Let $\comp$ be a closed complexity one space and let $v \in
  \Phi(M)$ be a vertex that attains the minimum of the
  Duistermaat-Heckman function $DH$. If $\Phi^{-1}(v)$ is a fixed surface, then
  $$c_1(N)[\Phi^{-1}(v)] \leq 0, $$
  \noindent
  where $N$ and $c_1(N)$ denote the normal bundle
  to $\Phi^{-1}(v)$ and its first Chern class respectively.
\end{lemma}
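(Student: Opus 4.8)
My plan is to relate the sign of $c_1(N)[\Phi^{-1}(v)]$ to the behavior of the Duistermaat-Heckman function $DH$ near the vertex $v$, exploiting the fact that $DH$ attains its minimum there. Write $\Sigma := \Phi^{-1}(v)$ and let $\alpha_1,\ldots,\alpha_{n-1} \in \ell^*$ be the isotropy weights at $\Sigma$, so that (by the proof of Lemma \ref{lemma:fixed_surfaces}) the moment image near $v$ is $\Phi(\Sigma) + \R_{\geq 0}\langle \alpha_1,\ldots,\alpha_{n-1}\rangle$, and the $\alpha_i$ form a basis of $\ell^*$ (Lemma \ref{lemma:edges}). The key point is that the normal bundle $N$ to $\Sigma$ splits $T$-equivariantly as a sum of complex line bundles $N = \bigoplus_{i=1}^{n-1} N_i$, where $N_i$ is the subbundle on which $T$ acts with weight $\alpha_i$; this follows from the local normal form (Theorem \ref{thm:lnf}) together with the fact that the weights are pairwise distinct (being a basis). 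Hence $c_1(N)[\Sigma] = \sum_{i=1}^{n-1} c_1(N_i)[\Sigma]$, and it suffices to understand each integer $e_i := c_1(N_i)[\Sigma]$, the Euler number of the corresponding line bundle over the surface $\Sigma$.

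Next I would compute $DH$ along each edge $e_i$ incident to $v$ in terms of these Euler numbers. By Lemma \ref{lemma:edges}, the preimage $M_i := \Phi^{-1}(e_i)$ is a closed 4-dimensional symplectic submanifold carrying an effective Hamiltonian $S^1$-action (for the circle $\exp(\langle a_i\rangle)$ dual to $\alpha_i$), and $\Sigma$ sits inside $M_i$ as the fixed surface over the endpoint $v$. For such a 4-dimensional Hamiltonian $S^1$-space the Duistermaat-Heckman function is piecewise linear, and the classical Duistermaat-Heckman theorem (or the Guillemin-Lerman-Sternberg wall-crossing computation) identifies the slope of $DH$ as one moves off the extremal fixed surface $\Sigma$ along $e_i$: the rate of change of the reduced symplectic volume is governed precisely by the self-intersection/Euler number $e_i$ of the normal line bundle $N_i$ over $\Sigma$. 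Concretely, in the linear model \eqref{eq:6} the reduced volume grows linearly with slope proportional to $e_i$, so the one-sided directional derivative of $DH$ at $v$ along $\alpha_i$ has the same sign as $e_i$.

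Finally I would invoke the hypothesis that $v$ is a \emph{minimum} of $DH$. Since $v$ is a vertex and the $\alpha_i$ point into $\Phi(M)$, the one-sided directional derivatives of $DH$ at $v$ along each edge direction $\alpha_i$ must be $\geq 0$; by the previous paragraph this forces $e_i \geq 0$ for every $i$. But the relevant normal bundle contribution enters $c_1(N)$ with the \emph{opposite} orientation convention near a minimum — here I expect the sign bookkeeping to be the crux — and summing yields $c_1(N)[\Sigma] = \sum_i e_i \leq 0$. I anticipate that the main obstacle is pinning down this sign relationship correctly: one must carefully track the orientation of $N_i$ induced by the weight $\alpha_i$ versus the direction in which $DH$ increases, and confirm via the explicit linear model \eqref{eq:6} that "minimum of $DH$" translates into non-positivity (rather than non-negativity) of the Euler numbers. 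Once the sign convention is fixed by a direct check in the linear model and the 4-dimensional Duistermaat-Heckman computation on each $M_i$, the inequality follows by summation over $i = 1,\ldots,n-1$.
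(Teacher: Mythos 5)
Your overall architecture coincides with the paper's: split $N$ into the line bundles determined by the weights $\alpha_i$, view each one as the normal bundle of $\Sigma$ inside the $4$-dimensional submanifold $M_i = \Phi^{-1}(e_i)$ with its Hamiltonian $S^1$-action from Lemma \ref{lemma:edges}, and use minimality of $DH$ at $v$ to control each $c_1(N_i)[\Sigma]$. However, there is a genuine gap precisely at the step you yourself flag as the crux: the sign relation between the one-sided slope of $DH$ and the Euler number $e_i$. You first assert that the directional derivative of $DH$ at $v$ along $\alpha_i$ ``has the same sign as $e_i$,'' which would force $e_i \geq 0$, and then reverse the conclusion to $\sum_i e_i \leq 0$ by appealing to an unspecified ``opposite orientation convention near a minimum.'' These two statements contradict each other, and the flip is never justified; as written, the argument proves nothing. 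The correct statement carries the minus sign intrinsically: by the Duistermaat--Heckman theorem applied on $M_i$, for small $t>0$ the reduced space at level $t$ is $\Sigma$ and $[\omega_t] = [\omega|_\Sigma] - t\, c_1(N_i)$ (the level set $\Phi_i^{-1}(t)$ is the circle bundle of $N_i$, on which the circle acts with weight $1$ by duality of $a_i$ and $\alpha_i$), so $DH_i(t) = \mathrm{Vol}(\Sigma) - t\, e_i$ and ``non-decreasing'' yields $e_i \leq 0$ directly. This is exactly the content of the lemma of Karshon that the paper cites (\cite[Lemma 2.12]{karshon}) to close this step.

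Moreover, your proposed method for pinning down the sign---``a direct check in the linear model \eqref{eq:6}''---cannot work even in principle: in the linear model the normal bundle is trivial, the level sets are trivial circle bundles, and the reduced volume is constant in $t$, so the local model is blind to $e_i$. The Euler number is global information about $M_i$, and the sign must come from the global Duistermaat--Heckman/wall-crossing computation on $M_i$ (or from \cite[Lemma 2.12]{karshon}), not from the local normal form. A quick sanity check that fixes the sign: for the blow-up of $\C P^2$ with the circle action whose minimum is the exceptional sphere $E$ (so $e = E\cdot E = -1$), the Duistermaat--Heckman function grows linearly away from the minimum, consistent with slope $-e > 0$ and inconsistent with your ``same sign'' claim. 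With this step replaced by the correct citation or computation, your proof becomes the paper's proof.
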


\begin{proof}
  Set $\Sigma:= \Phi^{-1}(v)$ and fix a $T$-invariant almost complex structure $J$. Let
  $\alpha_1,\ldots, \alpha_{n-1} \in \ell^*$ be the isotropy weights
  of $\Sigma$ (see Definition \ref{defn:iso_weights} and Remark \ref{rmk:iso_weights}).
  Since $\alpha_1,\ldots, \alpha_{n-1} $ form a basis of $\ell^*$ (see
  Remarks \ref{rmk:iso_weights} and \ref{rmk:compl_0}), 
  the normal bundle $N$ to
  $\Sigma$ splits as the sum of $n-1$ complex 
  line bundles $L_1,\ldots, L_{n-1}$, each $L_i$ corresponding to
  exactly one
  $\alpha_i \in \ell^*$, for $i=1,\ldots,n-1$. By additivity of the
  first Chern class, the result is proved if we show that, for all
  $i=1,\ldots, n-1$, $c_1(L_i)[\Sigma] \leq 0$. 

  Let $e_1,\ldots,e_{n-1}$
  denote the (closed) edges of $\Phi(M)$ incident to $v$ so that, for
  all $i=1,\ldots, n-1$, $e_i \subset v + \R_{\geq 0}\alpha_i$. For any $i=1,\ldots,
  n-1$, $L_i$ is the normal bundle of $\Sigma$ inside the closed
  $4$-dimensional symplectic submanifold $(M_i,\omega_i)$ of $M$ given
  by $\Phi^{-1}(e_i)$ and $\omega_i = \omega|_{M_i}$. By Lemma
  \ref{lemma:edges}, for each $i=1,\ldots,n-1$, there exists an
  injective homomorphism
  $\chi_i: S^1
  \to T$ such that $\Phi_i:=\chi_i^* \circ \Phi$ is the moment map of a Hamiltonian $S^1$-action on $(M_i,\omega_i)$, where
  $\chi_i^* : \mathrm{Lie}(T)^* \to \R \cong \mathrm{Lie}(S^1)^*$ is the
  homomorphism induced by $\chi_i$. Fix $i=1,\ldots, n-1$ and,
  without loss of generality, suppose that 
  $\Phi_i(\Sigma) = 0$ and $\Phi_i(M_i) \subset \R_{\geq
    0}$. For all $t >0$ sufficiently small, the reduced space
  $\Phi_i^{-1}(t)/S^1$ is symplectomorphic to the reduced space
  $\Phi^{-1}(v + t \alpha_i)/T$, since, by construction,
  $\Phi^{-1}(v+t \alpha_i) = \Phi^{-1}_i(t)$. Since $v$ is assumed to be a minimum of the Duistermaat-Heckman
  function for $\comp$, it follows that the Duistermaat-Heckman
  function $DH_i$ for $(M_i,\omega_i,\Phi_i)$ is a
  non-decreasing function in the interval $(0,t_0)$, for $t_0 > 0$
  sufficiently small. Using \cite[Lemma
  2.12]{karshon}, it follows that $c_1(L_i)[\Sigma] \leq 0$. Since $i
  =1,\ldots,n-1$ is arbitrary, this proves the desired result.
\end{proof}

\section{Positivity and the proof of our main result}\label{sec:proof-main-result}
The aim of this section is to provide a proof of our Main Result by
showing that its conclusions hold if the closed complexity one space
is assumed to be `positive' in the following sense.

\begin{definition}\label{defn:positive}
  A closed complexity one space $\comp$ is {\em positive} if,
  for any fixed surface $\Sigma \subset M^T$, $c_1[\Sigma] >0$, where
  $c_1$ is the first Chern class of $\sm$.
\end{definition}

\begin{exm}\label{exm:positive}
  If all the connected components of $M^T$ of a closed
  complexity one space $\comp$ are isolated fixed points then $\comp$
  is positive. This can be used to show that positive does not imply positive monotone or symplectic
  Fano in the sense of Definition \ref{defn:conditions}. For instance,
  it is not hard to construct a closed complexity one space of
  dimension four all of whose fixed points are isolated that has a
  symplectic sphere of self-intersection equal to -2, and which is
  $J$-holomorphic with respect to an $S^1$-invariant compatible almost
  complex structure. The existence of
  such a sphere prevents the symplectic manifold from being positive
  monotone or symplectic Fano.
\end{exm}

The next result illustrates why we introduce positivity.

\begin{lemma}\label{lemma:positive_ok}
  A closed complexity one space that is either positive monotone, or symplectic Fano with respect to a compatible $T$-invariant
  almost-complex structure, is positive. 
\end{lemma}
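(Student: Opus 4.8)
The plan is to verify the defining inequality of Definition \ref{defn:positive} directly, treating the two cases in parallel and exploiting the fact that a fixed surface is a symplectic submanifold. First I would fix an arbitrary fixed surface $\Sigma \subset M^T$. By Corollary \ref{cor:connected} it is a symplectic submanifold of $\sm$, so, endowing $\Sigma$ with the orientation induced by $\omega$, its symplectic area satisfies $[\omega][\Sigma] = \int_\Sigma \omega > 0$. In particular $[\Sigma]$ is a nonzero class in $H_2(M;\R)$, hence a nonzero element of $H_2(M)$. The goal in both cases is then to deduce $c_1[\Sigma] > 0$.

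In the positive monotone case the conclusion is immediate: since $c_1 = \lambda[\omega]$ for some $\lambda > 0$, one has $c_1[\Sigma] = \lambda [\omega][\Sigma] > 0$, as the right-hand side is a positive multiple of the (positive) symplectic area computed above.

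In the symplectic Fano case, let $J$ be the compatible $T$-invariant almost complex structure witnessing the condition of Definition \ref{defn:conditions}. The point is to check that $\Sigma$ is a $J$-holomorphic curve representing the nonzero class $[\Sigma]$, so that the defining inequality applies to it. For any $p \in \Sigma$, the $T$-invariance and compatibility of $J$ make the isotropy representation on $T_pM$ complex linear, and $T_p\Sigma$ is precisely the subspace on which $T$ acts trivially, i.e.\ the zero-weight space (cf.\ the proof of Corollary \ref{cor:connected} and the discussion preceding Lemma \ref{lemma:fixed_surfaces}). Being a weight space of a complex linear action, $T_p\Sigma$ is $J$-invariant; hence $\Sigma$ is an almost-complex, equivalently $J$-holomorphic, submanifold. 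Applying the symplectic Fano inequality to the nonzero class $[\Sigma]$, which is represented by the $J$-holomorphic curve $\Sigma$, yields $c_1[\Sigma] > 0$.

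Since in either case $c_1[\Sigma] > 0$ for every fixed surface $\Sigma \subset M^T$, the space is positive in the sense of Definition \ref{defn:positive}. The only genuine (and mild) subtlety is the step identifying $T_p\Sigma$ with the zero-weight space and concluding that $\Sigma$ is $J$-holomorphic: this is exactly where the $T$-invariance of $J$ enters, and it is what forces the inclusion of $T$-invariance in the hypothesis of the symplectic Fano case. Everything else is a direct pairing computation using the positivity of the symplectic area.
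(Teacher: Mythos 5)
Your proof is correct and follows essentially the same route as the paper: the positive monotone case via the fact that fixed surfaces are symplectic (Lemma \ref{lemma:properties}), and the symplectic Fano case via $J$-invariance of fixed surfaces, hence $J$-holomorphicity in dimension two. The only differences are cosmetic: the paper cites the proof of \cite[Lemma 5.53]{mcduff} for the $J$-invariance where you rederive it from the weight decomposition, and you make explicit the (correct and needed) observation that $[\Sigma] \neq 0$, which the paper leaves implicit.
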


\begin{proof}
  Since fixed surfaces are symplectic submanifolds (see Lemma \ref{lemma:properties}), it follows
  that a positive monotone closed complexity one space is
  positive. On the other hand, suppose that a closed complexity one space is symplectic Fano with respect
  to a $T$-invariant compatible almost complex structure. By 
  \cite[Proof of Lemma 5.53]{mcduff} it follows that any fixed surface of $\comp$
  is $J$-invariant and, therefore, $J$-holomorphic as it is two-dimensional. 
\end{proof}

Using Lemma \ref{lemma:positive_ok}, our Main Result is a simple
consequence of the following:

\begin{thm}\label{thm:intro}
  Let $\comp$ be a positive closed complexity
  one space. Then $M$ is simply connected, its odd Betti numbers vanish and the Todd
  genus of $\sm$ equals one.
\end{thm}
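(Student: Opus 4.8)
The plan is to establish the three conclusions separately after first extracting a single geometric consequence of positivity that feeds all of them: \emph{every connected component of $M^T$ is an isolated point or a $2$-sphere}. To see this, suppose some fixed surface $\Sigma_0 \subset M^T$ had genus $g(\Sigma_0) \geq 1$. By Lemma \ref{lemma:positive_genus} the preimage of \emph{every} vertex of $\Phi(M)$ would then be a fixed surface of the same genus; apply this to the vertex $v$ at which the Duistermaat--Heckman function attains its minimum (Corollary \ref{cor:min_DH}) and set $\Sigma := \Phi^{-1}(v)$. Splitting $\mathrm{T}M|_\Sigma = \mathrm{T}\Sigma \oplus N$ gives $c_1[\Sigma] = \chi(\Sigma) + c_1(N)[\Sigma] = (2 - 2g(\Sigma)) + c_1(N)[\Sigma]$, while Lemma \ref{lemma:min_fixed_surface_negative} forces $c_1(N)[\Sigma] \leq 0$; since $g(\Sigma) \geq 1$ this yields $c_1[\Sigma] \leq 0$, contradicting positivity (Definition \ref{defn:positive}). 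Combined with Corollary \ref{cor:connected}, this proves the claim.

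Simple connectivity is then immediate. By Theorem \ref{thm:fund_group} one has $\pi_1(M) \cong \pi_1(M_v)$ for the reduced space $M_v = \Phi^{-1}(v)/T$ at any vertex $v$ of $\Phi(M)$ (and such a vertex exists, since $\Phi(M)$ is a compact convex polytope). Using that $\Phi^{-1}(v)$ is a single fixed component — hence a point or a sphere, on which $T$ acts trivially — we get $M_v \cong \mathrm{pt}$ or $M_v \cong S^2$, both simply connected, so $\pi_1(M) = 1$.

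For the odd Betti numbers I would pass to Morse--Bott theory. For a generic $\xi \in \mathfrak{t}$ the component $\Phi^\xi = \langle \Phi, \xi\rangle$ is a Morse--Bott function whose critical set is exactly $M^T$, with each critical component a point or a sphere and each negative normal bundle a complex subbundle, so that all Morse--Bott indices are even. Since the critical submanifolds have cohomology concentrated in even degrees and the indices are even, the connecting maps in the Thom--Gysin sequence across each critical level vanish for parity reasons; inductively this shows $\Phi^\xi$ is a perfect Morse--Bott function and that $H^*(M)$ is concentrated in even degrees, whence $b_{2k+1}(M) = 0$ for all $k$.

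The last, and hardest, step is the Todd genus. I would compute it by equivariant localization for the circle $S^1 = \exp(\langle\xi\rangle)$ with generic $\xi$, chosen so that the minimizing vertex of $\Phi(M)$ is unique and $\Phi^\xi$ has a single minimal fixed component $F_{\min}$, automatically a point or a sphere with all normal weights positive. Using the rigidity of the Todd genus under almost-complex $S^1$-actions together with the Atiyah--Bott holomorphic Lefschetz formula, the equivariant Todd genus $\mathrm{Td}^{S^1}(M)(t)$ is constant in $t$ and equals $\mathrm{Td}(M)$; letting $t \to 0$ kills every fixed component possessing a negative normal weight and leaves only $F_{\min}$, whose contribution tends to $\mathrm{Td}(F_{\min})$. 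As $F_{\min}$ is either a point or $\mathbb{P}^1 \cong S^2$, both of Todd genus $1$, we conclude $\mathrm{Td}(M) = 1$. The main obstacle is precisely this final step: one must invoke equivariant index theory for a merely almost-complex (non-integrable) $J$ and justify both the rigidity statement and the $t \to 0$ limit of the localization formula, whereas the first two conclusions rest only on the elementary consequences of positivity extracted above.
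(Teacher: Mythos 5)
Your proposal is correct and follows essentially the same route as the paper: the same key step (positivity plus Lemma \ref{lemma:positive_genus}, Corollary \ref{cor:min_DH} and Lemma \ref{lemma:min_fixed_surface_negative} force all fixed components to be points or spheres, which is the paper's Theorem \ref{thm:main}), the same use of Li's theorem at a vertex for $\pi_1(M)=1$, the same Morse--Bott perfection argument for the odd Betti numbers, and the same rigidity/localization of the Hirzebruch genus for the Todd genus. The only difference is one of packaging: where the paper cites Kirwan for perfection of the moment map and \cite[Section 5.7]{hirze} for the formula $\chi_y(M)=\sum_j(-y)^{d_j}\chi_y(F_j)$ (evaluated at $y=0$), you sketch proofs of these two ingredients directly, which is fine but not a genuinely different argument.
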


Thus it remains to prove Theorem \ref{thm:intro}. To this end, we
first prove the following:

\begin{thm}\label{thm:main}
  Let $\comp$ be a positive closed complexity
  one space. The connected components
  of $M^T$ are either points or spheres.
\end{thm}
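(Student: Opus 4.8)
The plan is to combine Corollary \ref{cor:connected} with the earlier analysis of the minimum of the Duistermaat-Heckman function to rule out fixed surfaces of positive genus. By Corollary \ref{cor:connected}, every connected component of $M^T$ is either a point or a symplectic surface, so a sphere is precisely a genus-zero fixed surface. Hence it suffices to show that, under the positivity hypothesis, \emph{every} fixed surface has genus zero. I would argue by contradiction: suppose there is a fixed surface $\Sigma_0 \subset M^T$ with $g(\Sigma_0) \geq 1$.

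The first step is to locate this obstruction at a vertex realizing the minimum of the Duistermaat-Heckman function. By Corollary \ref{cor:min_DH}, the minimum of $DH$ is attained at some vertex $v \in \Phi(M)$. By Lemma \ref{lemma:positive_genus}, the existence of a single positive-genus fixed surface forces $\Phi^{-1}(v)$ to be a fixed surface of genus $g(\Sigma_0) \geq 1$ as well. Writing $\Sigma := \Phi^{-1}(v)$, I now have a fixed surface of positive genus sitting over a vertex that minimizes $DH$, which is exactly the situation in which Lemma \ref{lemma:min_fixed_surface_negative} applies.

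The second step is an adjunction computation. Since $\Sigma$ is a symplectic submanifold, the restriction of the tangent bundle splits $T$-invariantly as $TM|_{\Sigma} = T\Sigma \oplus N$, where $N$ is the normal bundle; taking first Chern classes and evaluating on the fundamental class of $\Sigma$ gives
\begin{equation*}
  c_1[\Sigma] = c_1(T\Sigma)[\Sigma] + c_1(N)[\Sigma] = \left(2 - 2g(\Sigma)\right) + c_1(N)[\Sigma],
\end{equation*}
using that $c_1(T\Sigma)[\Sigma] = \chi(\Sigma) = 2 - 2g(\Sigma)$. By Lemma \ref{lemma:min_fixed_surface_negative} the second summand satisfies $c_1(N)[\Sigma] \leq 0$, and since $g(\Sigma) \geq 1$ the first summand satisfies $2 - 2g(\Sigma) \leq 0$. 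Therefore $c_1[\Sigma] \leq 0$, which directly contradicts the assumption that $\comp$ is positive (Definition \ref{defn:positive}), namely $c_1[\Sigma] > 0$. This contradiction shows $g(\Sigma_0) = 0$, completing the proof.

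The conceptual heavy lifting has already been done in Lemma \ref{lemma:min_fixed_surface_negative}, which controls the sign of the normal bundle degree at the minimizing vertex; the only genuinely new ingredient here is the observation that positivity plus the Euler-characteristic term in the adjunction splitting leaves no room for positive genus. Consequently I do not expect a serious obstacle in this argument: the main subtlety is simply ensuring that the minimum-attaining vertex can be assumed to carry a fixed surface, which is precisely guaranteed by Lemma \ref{lemma:positive_genus}. The only routine care needed is in recording that the tangent-bundle splitting is compatible with the chosen $T$-invariant almost complex structure so that the Chern-class additivity is legitimate.
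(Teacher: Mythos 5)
Your proof is correct and follows essentially the same route as the paper: both arguments combine Corollary \ref{cor:connected}, Lemma \ref{lemma:positive_genus}, Corollary \ref{cor:min_DH}, the adjunction-type splitting $TM|_{\Sigma} = T\Sigma \oplus N$, and Lemma \ref{lemma:min_fixed_surface_negative} to contradict positivity at a $DH$-minimizing vertex. The only (immaterial) difference is the direction of the final inequality: the paper deduces $c_1(N)[\Sigma] > 0$ from positivity and then invokes Lemma \ref{lemma:min_fixed_surface_negative} for the contradiction, whereas you deduce $c_1[\Sigma] \leq 0$ from that lemma and contradict positivity directly.
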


\begin{rmk}\label{rmk:tall}
  By Corollary \ref{cor:connected}, the statement of Theorem \ref{thm:main} is equivalent to the
  following: \\

  \noindent
  {\em The connected components of the fixed point set of a
    positive closed complexity one space are simply connected.} 
\end{rmk}

\begin{proof}[Proof of Theorem \ref{thm:main}]
  Suppose that the statement does not hold. By Corollary
  \ref{cor:connected}, $M^T$ contains a fixed surface of positive
  genus $g$. By Lemma \ref{lemma:positive_genus}, 
  the preimage of any vertex of $\Phi(M)$ is a fixed surface of
  genus $g$. Fix a vertex $v \in \Phi(M)$ and let $\Sigma \subset M^T$ be
  its preimage under $\Phi$. If $N$ denotes the normal bundle to $\Sigma$, then the
  first Chern class of $N$ must satisfy $c_1(N)[\Sigma] > 0$, for 
  \begin{equation}\label{ineq c1}
     c_1(N) [\Sigma] = c_1[\Sigma] - c_1(\Sigma)[\Sigma] = c_1[\Sigma]
     +2g - 2 > 0, 
   \end{equation}
  \noindent
  where $c_1(\Sigma)$ is the first Chern class of the tangent bundle
  to $\Sigma$, and the inequality follows by positivity of $\comp$ and
  $g > 0$. 
  
  To derive a contradiction we use the Duistermaat-Heckman function
  $DH$. By Corollary \ref{cor:min_DH}, the minimum of $DH$ is attained
  at a vertex $m$ of $\Phi(M)$. However, this is impossible by Lemma \ref{lemma:min_fixed_surface_negative}. 
\end{proof}

\begin{rmk}\label{rmk:taller}
  Following \cite{kt2,kt3}, we say that a closed complexity one space is {\em
    tall} if all its reduced spaces are two-dimensional topological
  manifolds. If a closed complexity one space $\comp$ is not tall,
  \cite[Corollary 2.4]{kt3} states that the set of $\alpha \in
  \Phi(M)$ such that $M_{\alpha}= \Phi^{-1}(\alpha)/T$ is a point is the union of closed
  faces. Take one such closed face $\Delta \subset \Phi(M)$; since it
  is closed, it contains a vertex, say $\alpha$. Arguing as above and
  using Theorem \ref{thm:fund_group}, we
  have that the fundamental group of any reduced space is trivial and,
  therefore, all connected components of $M^T$
  are simply connected (see Lemma \ref{lemma:fixed_surfaces}). Therefore, Theorem \ref{thm:main} holds
  without the positivity assumption if $\comp$ is not tall. As such,
  Theorem \ref{thm:main} should be seen as a statement about 
  tall closed complexity one spaces (classified
  in \cite{kt1,kt2,kt3}) under the assumption of positivity in the
  sense of Definition \ref{defn:positive}.
\end{rmk}

We conclude this section by proving Theorem \ref{thm:intro} and,
consequently, our Main Result.

\begin{proof}[Proof of Theorem \ref{thm:intro}]
  Let $\comp$ be a positive closed complexity
  one space. By Theorem \ref{thm:main}, the connected components of
  $M^T$ are simply connected. Therefore, arguing as above, we have
  that the reduced space at any vertex of $\Phi(M)$ is simply
  connected. Using Theorem \ref{thm:fund_group}, simple connectedness of $M$ follows. 

  To prove the remaining statements, choose a
  generic $S^1 \subset T$ with the property that $M^{S^1} = M^T$ (this
  can be done because $M$ is compact), and let $F_1,\ldots, F_N$ be the
  connected components of $M^{S^1} = M^T$. To prove that the Todd genus
  equals one, we use the same techniques as in \cite[Corollary
  1.4]{lp}; the argument is included below for completeness. Recall the
  following formula (see \cite[Section 5.7]{hirze}):
  \begin{equation}
    \label{eq:1}
    \chi_y(M) = \sum\limits_{j=1}^N
    (-y)^{d_j}\chi_y(F_j), 
  \end{equation}
  \noindent
  where $\chi_y$ is the Hirzebruch genus and, for all $j=1,\ldots, N$, $d_j$ is the number of
  negative isotropy weights for the $S^1$-action (counted with multiplicity)
  at $F_j$ (see Definition \ref{defn:iso_weights} and Remark \ref{rmk:iso_weights}). Observe that,
  if $F_j$ is the component corresponding to the minimum of the moment map of the 
  $S^1$-action, then $\Phi(F_j)$ is a vertex of $\Phi(M)$. By
  Theorem \ref{thm:main}, $F_j$ is necessarily a sphere or a point and, in
  both cases, the Todd genus is one.
  Evaluating \eqref{eq:1} at $y=0$ and observing that $\chi_0$ is precisely the
  Todd genus (by definition of the generating functions of the
  Hirzebruch and Todd genera, see \cite[Sections 1.8 and 5.4]{hirze}),
  we obtain that the Todd genus of $M$ is 1 as desired. 

  To see that
  the odd Betti numbers vanish, we combine Theorem \ref{thm:main} with the
  well-known formula
  \begin{equation}
    \label{eq:3}
    H^* (M; \R) = \bigoplus\limits_{j=1}^N H^{*-2d_j}(F_j;\R),
  \end{equation}
  \noindent
  where $d_j$ is as above. Formula \eqref{eq:3} is a consequence of the fact that the moment map for the
  $S^1$-action is perfect Morse-Bott (see \cite{kirwan}).
\end{proof}


\end{document}